\numberwithin{equation}{section} \setlength{\oddsidemargin}{.0001in}
\newcommand{\bean}{\begin{eqnarray*}}
\newcommand{\eean}{\end{eqnarray*}}
\newcommand{\be}{\begin{equation}}
\newcommand{\ee}{\end{equation}}
\newcommand{\bd}{\begin{displaymath}}
\newcommand{\ed}{\end{displaymath}}
\newcommand{\nab}{\vec{\nabla}_{x}}
\newcommand{\beq}{\begin{equation}}
\newcommand{\eeq}{\end{equation}}
\newcommand{\bea}{\begin{eqnarray}}
\newcommand{\eea}{\end{eqnarray}}
\newtheorem{lemma}{Lemma}[section]
\newtheorem{theorem}[lemma]{Theorem}
\newtheorem{definition}[lemma]{Definition}
\newcommand{\abs}[1]{\left\vert{#1}\right\vert}
\newcommand{\R}{\mathbb{R}}
\newcommand{\e}{\varepsilon}
\newcommand{\norm}[1]{\left\Vert#1\right\Vert}
\newtheorem{proposition}[lemma]{Proposition}
\def\({\left(}
\def\){\right)}
\def\1{\mathds{1}}
\def\a{{\alpha}}
\def\bo{\partial \Omega}
\def\dist{\text{dist}\ }
\def\div{\mathrm{div} \ }
\def\ep{\varepsilon}
\def\hal{\frac{1}{2}}
\def\lep{{|\mathrm{log }\ \ep|}}
\def\lepp{{|\mathrm{log }\ \ep'|}}
\def\l|{\left|}
\def\mc{\mathbb{C}}
\def\mr{\mathbb{R}}
\def\nab{\nabla}
\def\r|{\right|}
\def\he{h_{ex}}
\begin{document}

\title{Large vorticity stable solutions to the  Ginzburg-Landau equations}

\author{Andres Contreras and Sylvia Serfaty}

\begin{abstract}
We construct local minimizers to the Ginzburg-Landau functional of superconductivity whose number of vortices $N$ is prescribed and blows up as the parameter $\ep$, inverse of the Ginzburg-Landau parameter $\kappa$, tends to $0$. We treat the case of $N$ as large as $\lep$, and a wide range of intensity of external magnetic field.  The vortices of our solutions arrange themselves with uniform density over a subregion of the domain bounded by a ``free boundary" determined via an obstacle problem, and  asymptotically tend  to minimize the ``Coulombian renormalized energy" $W$ introduced in \cite{Vorlatt}.
  The method, inspired by \cite{Branches}, consists in minimizing the energy over a suitable subset of the functional space, and in showing that the minimum is achieved in the interior of the subset. It also relies heavily on refined asymptotic estimates for the Ginzburg-Landau energy obtained in \cite{Vorlatt}.
\end{abstract}\maketitle

\section{Introduction}
Let $\Omega\subseteq\R^2$ be a bounded and smooth domain. We  are interested in finding  local minimizers of the 2D Ginzburg-Landau functional of superconductivity
\begin{equation}\label{Barcarola}
G_{\e}(u,A):=\frac{1}{2}\int_{\Omega}\abs{(\nabla-iA)u}^2+\abs{\nabla\times A-h_{ex}}^2+\frac{(1-\abs{u}^2)^2}{2\e^2}.
\end{equation}
Here, the complex-valued function $u$ is a pseudo wave function, the ``order parameter" in physics,  that indicates the local state of the superconductor. The material is in the superconducting state in the regions where $\abs{u}\approx 1,$  and in the normal state where $\abs{u}\approx 0.$ The zeros of the order parameter carrying a non-zero degree are called {\it  vortices}.
The parameter $h_{ex}>0$ is the intensity of the  applied magnetic field, $A$ is the induced magnetic potential, $h:=\nabla\times A$ is the induced magnetic field, and finally $\e$ is the inverse of the ``Ginzburg-Landau parameter."
 For more background, we refer to standard physics textbooks, such as \cite{tinkham,tilley}, or to \cite{Book} and references therein.

We are interested in the asymptotic regime $\e\to 0.$  This corresponds to the characteristic lengthscale of the vortices going to $0$.

The vortices of a critical point of \eqref{Barcarola} are conveniently studied via the vorticity measure
  $$\mu(u,A):=\nabla\times j(u,A)+\nabla\times A,$$ where $j(u,A):=(iu, \nab_A u)$ denotes the superconducting current associated to a configuration and $\nab_A=\nab -i A$. We also
 write $j_\e:=j(u_\e,A_\e),$ for $(u_\e,A_\e)$ a pair depending on $\e.$
In the limit $\e\to 0$, the vorticity measure is well  approximated by a sum of Dirac masses at the vortex centers, for a precise statement see the Jacobian estimate \eqref{JacEst} below.

In what follows we write, for quantities $A_\e$ and $B_\e$ depending on $\e,$ $A_\e\lesssim B_\e$ when there is a constant $C$ independent of $\e$ such that $A_\e\leq CB_\e.$ We also write
 $A_\e\ll B_\e $ when $\lim_{\ep \to 0} \frac{A_\e}{B_\e}=0$.

The asymptotic study of critical points of \eqref{Barcarola}, as $\e\to 0$ for external fields $h_{ex}\ll\frac{1}{\e^2},$ has been carried out in a series of papers by Sandier and Serfaty, starting from  \cite{LocalminI,LocalminII}; continuing in \cite{Below,Bounded,LimVort}; the monograph \cite{Book}, and more recently \cite{Vorlatt}.
It was shown in \cite{Below} that for $h_{ex}=o\left(\lep\right)$ there is a critical value $\lambda_{*}$ of $\lambda=\lim_{\e\to 0}\frac{h_{ex}}{\lep}$ below which minimizers have no vortices. It was further shown in \cite{Bounded} that if $h_{ex}-\lambda_{*}\lep\lesssim\log\lep$ then the number of vortices of a minimizer remains bounded independent of $\e.$ For $  \log \lep \ll \he- \lambda_* \lep \ll \lep$, minimizers have $N$ vortices with $1\ll N\ll \he$ as shown in \cite{Book}, Chap. 9, while
 for $\lambda>\lambda_{*}$ possibly $=+\infty$, minimizers exhibit  a number $N$ proportional to $h_{ex} $  of vortices with uniform density in a subregion determined by an obstacle problem (see \cite{LimVort}, \cite{Book} Chap. 7, and \cite{Vorlatt}).

The behavior of critical points of \eqref{Barcarola} that are not necessarily minimizers of the energy, was the object of study of \cite{LimVort}. In \cite{LimVort} the vorticity measures $\mu(u_\e,A_\e)$ associated to critical points $(u_\e,A_\e)$ of \eqref{Barcarola}, normalized by the number of vortices,  are shown to converge weakly to a measure $\mu.$ The measure $\mu$ has the form $\mu:=-\Delta h+h,$  where $h$ is a ``stationary function" for a free boundary problem (see \eqref{Aux.1} below), and  satisfies in some weak sense the relation $\mu \nab h=0$.  This expresses that the average force acting on the vortices ($\nab h$) vanishes where the vortices are, i.e. on the support of $\mu$, thus allowing them to be at equilibrium.
This result was later proved under less restrictive assumptions in \cite{Book}, Chapter 13, and included cases  not previously covered.

 It is then natural to ask the converse, that is,  given a measure $\mu$ associated to a stationary function $h$ as above,  can one find $(u_\e,A_\e)$ critical points of \eqref{Barcarola} such that $\mu(u_\e,A_\e)$, suitably normalized, converges weakly to  $\mu?$
  We will answer this question partially, by building  solutions to the Ginzburg-Landau equations which are local (but not global) minimizers of the energy, and have a prescribed and divergent (as $\e\to 0$) number of vortices $N$. The same thing has already been accomplished for  a bounded number of vortices in \cite{Branches}, and also in \cite{Book}, Chap. 11 for $N$ slowly diverging as $\ep\to 0$.

   Note that the question of inverse problems, i.e. whether given a critical point
of a limiting energy, one can find  corresponding critical points of the original problem, has been addressed in a variety of contexts, and for Ginzburg-Landau in particular : see \cite{linlin,dulin} for the situation of  a finite number of vortices in 2D, and \cite{ABM,smz, jms} for lines of vortices in 3D. Our approach, which is not local inversion but rather local minimization, is more in the spirit of \cite{smz,jms} for vortex lines in 3D.
All previous results however, concerned only finite numbers of vortices.
Another related context where such a question is addressed is that of the Allen-Cahn equation, whose limiting energy is the perimeter functional, for results there see \cite{kdpw,ks,pr}.

The stable solutions found in \cite{Branches} and \cite{Book} Chap. 11, are shown to exist for a wide range of $\he$ (they are in fact globally minimizing for only one value of $\he$): they exist for fields of intensity up to $\sim\frac{1}{\e^{\alpha_0}},$ where $\alpha_0<\frac{1}{2},$ and also for fields almost as small as constant. They thus form  branches of solutions, each corresponding to a number $N$ of vortices, with $N$ bounded with $\e$, or even unbounded, but  subject to the restrictions $N^2\leq\eta h_{ex},$ $h_{ex}\leq N\e^{\frac{-2\eta}{N^2+\eta}},$ where $\eta>0$ is a small but fixed parameter.
These restrictions imply in particular (cf. \cite{Book}) that in any case $N\ll\left(\lep\right)^{\frac{1}{2}}$ and if $N\to\infty,$ then $h_{ex}\ll\frac{1}{\e^\alpha}$ for any $\alpha>0,$ thus leaving open the question of existence of stable solutions with $\left(\lep\right)^{\frac{1}{2}}\lesssim N\lesssim h_{ex}$ vortices.
We address this question and prove for the first time that branches of local minimizers with $N$ vortices continue to exist up to  $N =O(\he)$, with restrictions on  $N$ and $h_{ex}$ specified below,  see \eqref{kinderszenen}--\eqref{Asshexternal}.

The analysis in \cite{Branches,Book} consisted, roughly,  in minimizing the energy $G_\ep$ ``among configurations with $N$ vortices" and exploiting the quantization of $N$ through a careful expansion of the minimal energy of a configuration in terms of its vortices. This expansion came with an error $o(N^2).$ However, the method followed there only tolerates an error of $o\left(\lep\right)$ (hence at least the condition $N^2 \lesssim  \lep$)  which thus limited the range of applicability of the approach. What we do here is to follow the same general idea of local minimization but this time over a class which is defined so as to exploit the results and methods   of \cite{Vorlatt} to get a more precise energy expansion with an error of only  $o(N)$. In this way we can cover the range of $N$ even comparable to $\lep$, and fields as large as a power of $\frac{1}{\e}.$
\subsection{Statement of main result}

In order to state the main result of the article in more accurate terms, we first proceed to introducing some auxiliary function.

\subsubsection*{An Obstacle Problem}
Let $N$ be a positive number (typically an integer representing the number of vortices). Following  \cite{Vorlatt},  we denote by $h_{\e,N}$ the minimizer of
\begin{equation}\label{Aux.1}
\frac{1}{2}\int_{\Omega}\abs{\nabla h}^2+\abs{h-h_{ex}}^2,
\end{equation}
over the set of functions $h$ that are equal to $h_{ex}$ on the boundary and such that
\begin{equation}
\int_{\Omega}\abs{-\Delta h+h}=2\pi N.\nonumber
\end{equation}

As long as $2\pi N \le \he |\Omega|$ (an assumption we will need to make), this minimizer $h_{\e, N}$  exists and
it is the solution to an obstacle problem (see  Lemma 5.1 and Appendix A  in \cite{Vorlatt}).  There exists  a coincidence set $\omega_{\e,N}\subset \Omega$  and a number $0<m_{\e,N}\leq 1$ (for us the  normalized vortex density) such that
\beq\label{relaciones}
\mu_{\e,N}:=-\Delta h_{\e,N}+h_{\e,N}=h_{ex}\,m_{\e,N}\mathbf{1}_{\omega_{\e,N}}
\mbox{   and   }
h_{ex}\,m_{\e,N}\abs{\omega_{\e,N}}=2\pi N,
\eeq where $\mathbf{1}$ denotes the characteristic function of a set and $|\cdot |$ its area.
If $\Omega$ is convex, then $\omega_{\e,N}$ is also convex,  by a result in \cite{caff}. To avoid technical difficulties, we will assume in the sequel that $\Omega$ is convex.
It is also checked in \cite{Vorlatt} that $m_{\e,N}$ is bounded universally from below by a positive constant $m_0$ and that $m_{\e,N}\mapsto\abs{\omega_{\e,N}}$ is increasing.
Without loss of generality we can thus assume that
\beq\label{assumptiononN}
\lim_{\e\to 0}m_{\e,N}=:m\ge m_0>0,
\eeq
where $N$ may also depend on $\ep$.

As in \cite{Vorlatt} (cf. Lemma 5.2), the function $h_{\e,N}$ will serve to ``split'' the energy $G_{\e}$ in a convenient way.
The vorticity of the solutions we construct will ressemble $\mu_{\e, N}$, which is a constant over the subregion $\omega_{\e, N}$, hence we will have a uniform density of vortices there, and a vanishing one outside.







\subsubsection*{Range of fields and vorticity studied}

We consider fields and vortex numbers satisfying the following conditions
\beq\label{kinderszenen}
1\ll N\leq\min\left\{c\frac{h_{ex}\abs{\Omega}}{2\pi},K\log\frac{1}{\e\sqrt{h_{ex}}}\right\},
\eeq
for constants $0<c<1$  and $K>0$ independent of $\e$, and
\beq\label{Asshexternal}
  h_{ex}\lesssim\e^{-s_0}\mbox{ for some }s_0<\frac{1}{7}.
\eeq
Note that  $N\le K \log \frac{1}{\e\sqrt{h_{ex}} }$ could equivalently be written $N \le K \lep$ (changing $K$ if necessary).
Let us also point out that \eqref{kinderszenen} implies $h_{ex}\gg 1$
and   $0<m_0\leq m<1$ (see \eqref{relaciones}), and thus $1-m_{\ep, N}$ is always bounded away from $0$ for $\e$ small enough, and $h_{ex}(1-m_{\ep, N}) \gg 1$. We will make use of these facts repeatedly during the proofs.

Our main result is:
 \begin{theorem}\label{theotheo}
Let $\Omega\subseteq\mathbb{R}^2$ be a convex smooth bounded set.
Assume that $N=N(\e)$  a family of positive integers and $\he=\he(\ep)$  satisfy conditions   \eqref{kinderszenen}--\eqref{Asshexternal}.
Then, for $\e$ small enough (depending on the constants above), there exists a local minimizer $(u_\e,A_\e)$ of \eqref{Barcarola} such that the following holds:
\begin{enumerate}
\item[(i)]  There exists a measure $\nu_\ep$ of the form  $2\pi \sum_i d_i\delta_{a_i}$, with $d_i \in \mathbb{Z}$ and $a_i \in \Omega$, such that
\begin{equation} \label{JacEst}
\norm{\mu(u_\e, A_\e) -\nu_\e}_{(C_0^{0,1}(\Omega))^*}\le \ep^{\a}
\end{equation} for some  $\a>0$, and   we have
\beq\label{Azteca}
\nu_\e(\Omega)=2\pi N,
\eeq
i.e. the total vorticity of $u_\e$ is $N.$
\item[(ii)] The asymptotics
\beq\label{Maya}
G_{\e}(u_\e,A_\e)= G_{\e}^N+\pi N\log \frac{1}{\ep\sqrt{\he}}+NW_{m}-2\pi Nh_{ex}(1-m_{\e,N})+o(N),
\eeq
hold as $\e\to 0,$ where $W_m$ and $G_\e^N$ are  constants explicited in \eqref{WW} and \eqref{DefGepsn} respectively, and $m$ is as in \eqref{assumptiononN}.
\item[(iii)] For any $1<p<2,$
\beq\label{small}
\norm{\mu(u_\e,A_\e)-\mu_{\e,N}}_{W^{-1,p}(\Omega)}\le C_p \sqrt{N},
\eeq where $C_p$ depends only on $p$.
\item[(iv)] For $1<p<2$, denoting by $P_\e$ the probability measure on $L_{loc}^{p}(\R^2,\R^2)$ which is the push-forward of the normalized uniform measure on $\omega_{\e,N}$ by the map $x\mapsto \sqrt{\he}  j_\e(\sqrt{\he} (x+\cdot) ),$ \footnote{Here $j_\e$ is extended by $0$ outside $\Omega$.} we have that, up to subsequence, $P_\e$ converges weakly to a translation invariant probability measure $P$ on $L_{loc}^{p}(\R^2,\R^2)$ such that $P$-a.e. $j\in\mathcal{A}_{m}$ and $P$-a.e. $j$ minimizes $W$ over $\mathcal{A}_{m},$ where $W$ and $\mathcal{A}_m$ are defined just below.
\end{enumerate}
\end{theorem}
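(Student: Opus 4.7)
The plan, in the spirit of \cite{Branches}, is to minimize $G_\e$ over a carefully chosen subclass $\mathcal{V}_{\e,N}$ of admissible configurations whose vorticity is kept close to the obstacle measure $\mu_{\e,N}$, and then to prove that the minimum is attained in the interior of $\mathcal{V}_{\e,N}$. The resulting minimizer is then automatically a critical point of the unconstrained $G_\e$ and hence a local minimizer of the full functional.

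I would define $\mathcal{V}_{\e,N}$ to consist of pairs $(u,A)$ in the natural energy space satisfying a moderate energy cap $G_\e(u,A)\le G_\e^N+CN\lep$ together with the closeness constraint
\beq
\|\mu(u,A)-\mu_{\e,N}\|_{W^{-1,p}(\Omega)}\le M\sqrt{N},
\nonumber
\eeq
for constants $C,M$ to be fixed. Existence of a minimizer of $G_\e$ on $\mathcal{V}_{\e,N}$ follows from the direct method: the energy is weakly lower semicontinuous, and the constraint set is weakly closed in the relevant topology thanks to the Jacobian estimate \eqref{JacEst}, which is stable under the weak convergence of currents.

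The heart of the proof will be to produce upper and lower bounds on this constrained minimum that match to precision $o(N)$, yielding \eqref{Maya}. For the upper bound, I would construct a test configuration $(u_\e^*,A_\e^*)$ whose $N$ vortices are placed on a slight perturbation of a $W$-minimizing periodic lattice of density $\he m_{\e,N}/(2\pi)$ inside $\omega_{\e,N}$, with the induced field approximating $h_{\e,N}$ and $u^*\equiv 1$ away from the vortex cores. Using the splitting of $G_\e$ around $h_{\e,N}$ from \cite{Vorlatt} and the lattice computation of the $\Gamma$-limit, the energy of this test configuration expands precisely as the right-hand side of \eqref{Maya}. The matching lower bound comes from the same splitting combined with the probabilistic $\Gamma$-liminf of \cite{Vorlatt}: after blowing up at scale $\sqrt{\he}$ around points of $\omega_{\e,N}$, the vortex energy of any admissible configuration is at least $\pi N\log\frac{1}{\e\sqrt{\he}}+N\mathbb{E}_P W(j)+o(N)$, which is bounded below by $NW_m$.

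The main obstacle will be proving that the constrained minimizer is interior to $\mathcal{V}_{\e,N}$, i.e.\ that \eqref{small} holds strictly. If the minimizer $(u_\e,A_\e)$ saturated the constraint, a lower bound of the form $cM^2 N$ for the extra cost of the deviation from $\mu_{\e,N}$ would contradict the sharp upper bound to precision $o(N)$, provided $M$ is chosen large. Here the improved $o(N)$ precision coming from the \cite{Vorlatt} framework is essential, and the quantitative restrictions \eqref{kinderszenen}--\eqref{Asshexternal} enter in controlling the several remainder terms (ball-construction cost, Jacobian error, and the gap between $h_{\e,N}$ and the actual induced field). Once interiority is established, $(u_\e,A_\e)$ solves the Ginzburg-Landau equations and is a local minimizer; conclusion (i) is the Jacobian estimate applied to it, (ii) is the matched expansion, (iii) is precisely the interiority statement, and (iv) follows by applying the blow-up and probabilistic convergence machinery of \cite{Vorlatt} to $(u_\e,A_\e)$, since its energy expansion coincides with that of a genuine minimizer of $W$ over $\mathcal{A}_m$.
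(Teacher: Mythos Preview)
Your overall strategy---minimize over a constrained class, match upper and lower bounds to $o(N)$, then show interiority---mirrors the paper's. But the specific class you propose and your interiority argument both have genuine gaps.

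The crux is interiority, and your argument for it is unsupported. You assert that saturating $\|\mu(u,A)-\mu_{\e,N}\|_{W^{-1,p}}=M\sqrt{N}$ forces an extra energy cost of order $cM^2N$. No such coercivity is available: the splitting \eqref{Decomp.Split} contains an $L^2$ term $\|\nabla\times A_1-\mu_{\e,N}\|_{L^2}^2$, not a $W^{-1,p}$ penalty on $\mu$ itself, and the difference $\mu-\nabla\times A_1=\nabla\times j$ can be of size $\sqrt{N}$ in $W^{-1,p}$ at essentially no extra cost (indeed \eqref{contj2} shows $\|j\|_{L^p}\lesssim\sqrt{N}$ for the actual local minimizer). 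A configuration could therefore sit on your boundary while still achieving the optimal energy expansion, and the argument collapses. In the paper, item (iii) is obtained only as a \emph{consequence} of the matched asymptotics, never as the interiority mechanism.

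Second, your class does not pin down the total vorticity. Testing $\mu-\mu_{\e,N}$ against a $W^{1,p'}_0$ function equal to $1$ on $\omega_{\e,N}$ controls $|\nu_\e(\Omega)-2\pi N|$ only up to $O(\sqrt{N})$, so \eqref{Azteca} does not follow; your remark that ``(i) is the Jacobian estimate'' covers \eqref{JacEst} but not the exact quantization.

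The paper's class is built precisely to exploit integer quantization. It minimizes over
\[
S_{\e,N}^\beta=\Big\{(u,A):\ J_{\e,N}(u,A_1)\le \pi N\lepp+NW_m+\beta N,\ \ \int_\Omega\xi\,\mu(u,A_1)\ge 2\pi N\xi_{\max}-\beta N\Big\}.
\]
The first inequality caps the vortex count at $N$; the second, together with an argument (Proposition~\ref{cuadripropo}) ruling out negative-degree vortices outside $\omega_{\e,N}$, forces it to be at least $N$. Interiority is then clean: on $(\partial S_{\e,N}^\beta)^+$ the $J_{\e,N}$ term contributes an excess $+\beta N$ while $\int\xi\mu$ is shown to be $2\pi N\xi_{\max}+o(1)$; on $(\partial S_{\e,N}^\beta)^-$ one loses $\beta N$ in $\int\xi\mu$ while $J_{\e,N}\ge \pi N\lepp+NW_m+o(N)$. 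Either way one contradicts the upper bound \eqref{Ubound}. The paper explicitly discusses (end of \S1.3) why more naive classes based on energy levels alone fail, and identifies the $\int\xi\mu$ constraint as the key new idea.
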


As announced,
this theorem proves the existence of locally minimizing solutions of Ginzburg-Landau with $N$ vortices. Moreover, their vortices tend to follow the average distribution $\mu_{\ep, N}$, and their superconducting currents, after blow-up at scale $\sqrt{\he}$, tend to minimize the ``Coulombian renormalized energy" $W$, introduced in \cite{Vorlatt}, whose definition we recall below.



\subsection{The renormalized energy $W$}
For each $m>0,$ consider the class of currents $j$ such that $\mbox{div} \ j=0$ and $\nabla\times j=\sum_{p\in\mathcal{P}}2\pi\delta_p-m$ for a discrete set $\mathcal{P},$ and $\#(\mathcal{P} \cap B_R)/R^2 $ is bounded for $R>1$.  We call this class $\mathcal{A}_m.$

For any family of sets $\{\mathbf{U}_R\}_{R>0} $ in $\R^2,$  $\chi_{\mathbf{U}_R}$ will be a family of associated positive cut-off functions satisfying
\beq
\abs{\nabla\chi_{\mathbf{U}_R}}\leq C,\,\,\,\,\mbox{Supp}(\chi_{\mathbf{U}_R})\subset\mathbf{U}_R,\,\,\,\,\chi_{\mathbf{U}_R}(x)=1\mbox{ if } d(x,(\mathbf{U}_{R})^{c})\geq1,\nonumber
\eeq
where the constant $C$ is independent of $R.$ We  specialize this to the family of balls $B_R$ of radius $R$ and center the origin.

\begin{definition} $W$ is defined, for $j\in\mathcal{A}_m,$ by
\beq
W(j)=\limsup_{R\to\infty}\frac{W(j,\chi_{B_R})}{\abs{B_R}},\nonumber
\eeq
where for any positive function $\chi$, we denote
\beq
W(j,\chi)=\lim_{\eta\to 0}\left(\frac{1}{2}\int_{\R^2\setminus\cup_{p\in\mathcal{P}}B(p,\eta)}\chi\abs{j}^2+\pi\log\eta\sum_{p\in\mathcal{P}}\chi(p)\right).\nonumber
\eeq
\end{definition}

For properties of $W$, we refer to \cite{Vorlatt}. It is proven there in particular that $\min_{j\in\mathcal{A}_{m}} W$ is achieved and finite.
We also set $\gamma$
\beq
\gamma:=\lim_{R\to\infty}\left[\frac{1}{2}\int rf'^2+\frac{(1-f^2)^2r}{2}+\frac{f^2}{r}\,dr-\pi\ln R\right],
\mbox{ constant introduced in \cite{BBH}},\nonumber
\eeq
where $f$ is such that $f(r)e^{i\theta}$ is the unique (cf. \cite{miro}), up to a phase shift, degree one radial solution of
\beq
-\Delta u+(1-\abs{u}^2)u=0\mbox{ in }\R^2.
\nonumber
\eeq 
We finally let  \beq\label{WW}
W_{m}:=\frac{2\pi}{m}\min_{j\in\mathcal{A}_{m}} W+\gamma.
\eeq

In \cite{Vorlatt}, it is conjectured that the minimum of $W$ over $\mathcal{A}_{m}$ is achieved for currents corresponding to a configuration of points in a perfect hexagonal lattice (of suitable volume). A small hint to this is provided there by the fact that $W$ is uniquely minimized,  among  perfect lattices of fixed volume, by the hexagonal lattice. In superconductivity, the hexagonal lattice also corresponds to the famous ``Abrikosov lattice", predicted from \eqref{Barcarola} and  observed in experiments. It was proven in \cite{Vorlatt} that global  minimizers of the Ginzburg-Landau functional \eqref{Barcarola} have currents which minimize $W$, after a suitable blow inside the free-boundary region that contains the vorticity.  If the conjecture about the minimum of $W$ is true, then one expects the vortices of global minimizers to arrange themselves in such an Abrikosov hexagonal lattice. From item (iv) of Theorem \ref{theotheo} the exact same thing is expected of the lo
 cally minimizing  solutions  found here. The only qualitative difference with the global minimizers is in the total number of vortices $N$, which manifests itself in a different $\mu_{\e, N}$ hence a different  local density of vortices and a different location for the free-boundary that encloses them.


\subsection{The local minimization method}
The local minimization first relies on the same  {\it energy splitting} introduced in \cite{Vorlatt}, which we now recall. For any $A$, set $$
A_1:=A-\nabla^{\perp}h_{\e,N}$$ where $h_{\e, N}$ is the solution of the obstacle problem \eqref{Aux.1} above.
Let us  also define accordingly
\begin{equation}
J_{\e,N}(u,A):=\frac{1}{2}\int_{\Omega}\abs{(\nabla-iA)u}^2+\abs{\nabla\times A-\mu_{\e,N}}^2+\frac{(1-\abs{u}^2)^2}{2\e^2}\,dx,\nonumber
\end{equation} with $\mu_{\ep,N}$ as in \eqref{relaciones},
the configuration independent quantity
\beq\label{DefGepsn}
G_\e^N:=\norm{h_{\e,N}-h_{ex}}^2_{H^1(\Omega)}+2\pi N(m_{\e,N}-1)h_{ex},
\eeq
and finally the function $\xi:=h_{ex}-h_{\e,N}$ (implicitly depending on $\ep$ and $N$, note that it is nonnegative in $\Omega$  and  vanishes on $\partial \Omega$).  We denote  \begin{equation}\label{ximax}
\xi_{\max}=\max_{\Omega} \xi = h_{ex}(1-m_{\e,N}).\end{equation}

\begin{lemma}
For any $(u,A)$, $G_{\e}(u,A)$ can be decomposed as:
\beq\label{Decomp.Split}
G_\e(u,A)=G_\e^N+J_{\e,N}(u,A_1)-\int_{\Omega}\xi\,\mu(u,A_1)+
\frac{1}{2}\int_{\Omega}(\abs{u}^2-1)\abs{\nabla h_{\e,N}}^2.
\eeq
\end{lemma}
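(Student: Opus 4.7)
The plan is to derive the identity by direct algebraic manipulation: substitute $A = A_1 + \nabla^\perp h_{\e,N}$ throughout $G_\e(u,A)$, invoke the obstacle-problem equation $-\Delta h_{\e,N} + h_{\e,N} = \mu_{\e,N}$ together with the boundary condition $h_{\e,N}|_{\partial\Omega} = h_{ex}$, and reorganize.

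First, I would expand the two main integrands in $G_\e(u,A)$. For the covariant kinetic term, writing $(\nabla-iA)u = (\nabla-iA_1)u - i(\nabla^\perp h_{\e,N})u$ and squaring, the cross-term is identified with $j(u,A_1) = (iu,(\nabla-iA_1)u)$, yielding
\[
|(\nabla-iA)u|^2 = |(\nabla-iA_1)u|^2 + |\nabla h_{\e,N}|^2|u|^2 - 2\nabla^\perp h_{\e,N}\cdot j(u,A_1).
\]
For the magnetic term, I would use $\nabla\times\nabla^\perp f = \Delta f$ and $\Delta h_{\e,N} = h_{\e,N}-\mu_{\e,N}$ to get $\nabla\times A - h_{ex} = (\nabla\times A_1-\mu_{\e,N}) - \xi$, whence
\[
|\nabla\times A-h_{ex}|^2 = |\nabla\times A_1-\mu_{\e,N}|^2 + \xi^2 - 2\xi\,\nabla\times A_1 + 2\xi\mu_{\e,N}.
\]
The potential term $(1-|u|^2)^2/(2\e^2)$ is unaffected.

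Next, summing these and extracting the three pieces that form $J_{\e,N}(u,A_1)$, I would reorganize the residue. The term $\frac{1}{2}\int|\nabla h_{\e,N}|^2|u|^2$ splits as $\frac{1}{2}\int(|u|^2-1)|\nabla h_{\e,N}|^2 + \frac{1}{2}\int|\nabla h_{\e,N}|^2$, producing the last summand of \eqref{Decomp.Split}. The two cross terms $-\int \nabla^\perp h_{\e,N}\cdot j(u,A_1)$ and $-\int \xi\,\nabla\times A_1$ combine into $-\int\xi\,\mu(u,A_1)$ by integration by parts: since $\xi = 0$ on $\partial\Omega$ and $\nabla^\perp\xi = -\nabla^\perp h_{\e,N}$, one has $-\int \nabla^\perp h_{\e,N}\cdot j(u,A_1) = -\int\xi\,\nabla\times j(u,A_1)$, and adding $-\int\xi\,\nabla\times A_1$ exactly reconstructs $-\int\xi\,\mu(u,A_1)$ via $\mu(u,A_1) = \nabla\times j(u,A_1) + \nabla\times A_1$.

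It then remains to identify the configuration-independent residue $\frac{1}{2}\int|\nabla h_{\e,N}|^2 + \frac{1}{2}\int\xi^2 + \int\xi\mu_{\e,N}$ with $G_\e^N$. The first two pieces combine into a multiple of $\|h_{\e,N}-h_{ex}\|_{H^1(\Omega)}^2$. For the third, I would invoke the obstacle-problem structure: since $h_{\e,N}$ equals the constant $h_{ex}m_{\e,N}$ on the coincidence set $\omega_{\e,N}$ supporting $\mu_{\e,N}$ (by \eqref{relaciones}), $\xi$ equals $\xi_{\max} = h_{ex}(1-m_{\e,N})$ on $\omega_{\e,N}$, hence $\int\xi\mu_{\e,N} = \xi_{\max}\cdot 2\pi N = 2\pi N h_{ex}(1-m_{\e,N})$. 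The only real subtlety in the whole computation lies in this final step, where sign and normalization conventions must be tracked carefully so that the constant $2\pi N(m_{\e,N}-1)h_{ex}$ appearing in the definition \eqref{DefGepsn} of $G_\e^N$ emerges correctly; apart from this, everything is mechanical.
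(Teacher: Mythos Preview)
Your proposal is correct and follows exactly the same route as the paper's proof, which merely records the two algebraic identities for $|(\nabla-iA)u|^2$ and $|\nabla\times A-h_{ex}|^2$ and declares that \eqref{Decomp.Split} ``follows easily.'' You have simply filled in the integration-by-parts step (using $\xi|_{\partial\Omega}=0$) and the identification of the constant term via the obstacle-problem relation $\xi\equiv\xi_{\max}$ on $\mathrm{supp}\,\mu_{\e,N}$, both of which the paper leaves implicit.
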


\begin{proof}
This splitting formula was written with  slightly different notation in \cite{Vorlatt}.
Decomposition \eqref{Decomp.Split} follows easily from the relations:
\beq
\abs{(\nabla-iA)u}^2=\abs{(\nabla-iA_1)u}^2+\abs{u}^2\abs{\nabla^{\perp}h_{\e,N}}^2-2j(u,A_1)\cdot\nabla^{\perp} h_{\e,N},\nonumber
\eeq
and
\begin{eqnarray}
\abs{\nabla\times A-h_{ex}}^2
&=&
\abs{\nabla\times A_1+\Delta h_{\e,N}-h_{\e,N}+h_{\e,N}-h_{ex}}^2 \nonumber\\
&=& \abs{\nabla\times A_1-\mu_{\e,N}}^2+\abs{h_{\e,N}-h_{ex}}^2+2(\nabla\times A_1-\mu_{\e,N})(h_{\e,N}-h_{ex}).\nonumber
\end{eqnarray}
\end{proof}

The last term in the right-hand side of \eqref{Decomp.Split} will be proven later to be $o(1)$ for the configurations of interest to us.

As explained above for the case of smaller values of $N$, we still
 seek our local minimizers as  configurations that minimize $G_\ep$  among configurations with $N$ vortices. The question is to define (following \cite{Branches}) a suitable  set
corresponding to this heuristic idea.
This is achieved here by minimizing  $G_\ep$ over

\begin{multline}\label{defS}
S_{\ep, N}^\beta := \bigg\{(u,A)\in H^1(\Omega, \mc)\times H^1(\Omega, \mr^2)
\mbox{ s.t. } J_{\e,N}(u,A_1)\leq\pi N\log \frac{1}{\ep\sqrt{\he}}+NW_m+\beta N;\\
\int_{\Omega}\xi\mu\geq 2\pi Nh_{ex}(1-m_{\e,N})-\beta N\bigg \},
\end{multline}
where $\beta$ is chosen  to be a small enough positive constant. The idea of this class  is that it allows for at most $N$ vortices when $\beta $ is small,  it leaves enough room for a particular test function to be admissible (the precision with  which we can compute its energy is $o(N)$ while the error allowed is $\gtrsim$ N), and  it can  easily be related to the Ginzburg-Landau energy $G_\ep$ via \eqref{Decomp.Split}.
As in \cite{Branches,Book}, the task is then, exploiting the quantization of $N$, to show that $\min_{S_{\e, N}^\beta} G_\ep$ is achieved in the interior of $S_{\e, N}^\beta$, thus providing a local minimizer of $G_\ep$, and to show it has the other desired properties. 
The class $S_{\e,N}^\beta$ differs from those in \cite{Branches, Book}. Indeed, following these works it would be tempting to try minimizing over the set of configurations $(u,A)$ such that:
\beq
\abs{ J_{\e,N}(u,A_1)-\left(\pi N\log \frac{1}{\ep\sqrt{\he}}+NW_m\right)}\leq\beta N.\nonumber
\eeq
However, if $ J_{\e,N}(u,A_1)=\pi N\log \frac{1}{\ep\sqrt{\he}}+NW_m-\beta N$  we cannot conclude that  $u$ has strictly less than $N$ vortices, which is crucial to assure that such a configuration on the boundary of the set would have an energy that exceeds that of a local minimizer.
If one replaces $\beta N$ by $\beta\log \frac{1}{\ep\sqrt{\he}}$ a similar problem occurs; in this scenario one is able to prove that if $(u,A)$ is in the corresponding lower boundary then it has at most $N-1$ vortices, but the  $-\beta\log \frac{1}{\ep\sqrt{\he}}$  in the value of $J(u,A_1)$ makes the energy of the hypothetical configuration possibly  drop below the one in Proposition \ref{lemmaparacota}, rendering that approach doomed.
To overcome this difficulty (and this is a novelty of the paper) we proceed by bounding the total vorticity from below indirectly by requiring the elements of $S_{\e,N}^\beta$ to satisfy
\beq
\int_\Omega\xi\mu(u,A_1)\geq 2\pi h_{ex}(1-m_{\e,N})N-\beta N,\nonumber
\eeq
this implies there are at least $N$ vortices as long as there are no vortices with negative degree outside the free boundary $\omega_{\e,N}.$
This is proven to be the case in Proposition \ref{cuadripropo}.

Finally, the question of treating the case of $N$ larger than the order  $\lep$ remains open, we believe it would require much more precise estimates on the energy than those provided by \cite{Vorlatt} that we use here, and this seems out of reach for the moment.
\\

The plan of the paper is as follows.
 In Section 2 we introduce notation and recall the Jacobian estimate.
 In Section 3 we  construct an explicit configuration with $N$ vortices uniformly distributed according to $\mu_{\ep, N}$ and a superconducting current  almost minimizing $W$,   we check the test configuration  belongs to the admissible class, and by computing its energy $G_\ep$, we obtain an almost optimal upper bound.
In Section 4 we analyze general properties of the elements of $S_{\e,N}^\beta$, in particular characterize their total vorticity as well as give lower bounds for the different components of the splitting of $G_\e.$ We conclude with the proof of Theorem \ref{theotheo} in Section 5, by showing minimizers do not lie on the boundary of the admissible class.
\\

{\it Acknowledgment:}  Both authors are supported by S.S's EURYI award.


\section{Preliminaries}

We will mostly work in blown-up coordinates at the characteristic lengthscale $1/\sqrt{\he}$.
Let
\beq\label{prelimeq}
\e':=\e\sqrt{h_{ex}}\mbox{ and }x':=\sqrt{h_{ex}}\,x.
\eeq
The domains $\Omega_{\e}$ and $(\omega_{\e,N})'$ correspond to $\Omega$ and $\omega_{\e,N}$ in the blown-up coordinates $x'.$ We transform the rest of the variables in a natural fashion, that is

\beq\label{foranymag}
u'(x')=u(x), \,\,
\xi'(x')=\xi(x) \mbox{ and }
A'(x')=\frac{1}{\sqrt{h_{ex}}}A(x),
\eeq
for any magnetic potential $A$.

The blown up currents and measures are defined in terms of these quantities in a clear way, also denoted with a prime.

We define, for $U\subset\Omega_\e,$ the sets:
\beq \label{22bis}
\widehat{U}:=\{x\in\Omega_\e/d(x,U)\leq 1\}
\eeq
\beq \label{22ter}
\check{U}:=\{x \in \Omega_\e/ d(x, \partial U) \ge 1\}
\eeq
and
\beq\label{tildeU}
\widetilde{U}:=\{x\in\Omega_\e/d(x,\partial U)\geq 2\}.
\eeq

We now recall the basic concentration estimate relating the Jacobian and the total vorticity, \`a la Jerrard-Soner \cite{js}.

\begin{theorem} (cf. \cite{Vorlatt}, Theorem 5)
Assume $G_\e(u_\e,A_\e)\lesssim (\e')^{-s},$ where $s<1$.  Then there exists a measure $\nu_\e'$ that depends only on $u_\e$ (not on $A_\e$) of the form  $\nu_\e=2\pi\sum_{i} d_i\,\delta_{a_i},$ for $d_i \in \mathbb{Z}$ and $a_i \in \Omega_\e$, with
\beq\label{Dubliners}
\norm{\mu'-\nu_\e'}_{(C_{0}^{0,1}(\Omega_\e))^{*}}\lesssim(\e\sqrt{h_{ex}})^{\frac{1}{2}}G_\e(u_\e,A_\e),
\eeq
and there exists a constant $C_1>0$ such that for any measurable set $E\subseteq\Omega_\e$
\beq\label{measu}
\abs{\nu_\e'}(E)\leq C_1\frac{\int_{\hat{E}}e_\e}{\abs{\log\e'}},
\eeq
where
\beq \label{ee}
e_\e(u,A):=\abs{\nabla_{A} u}^2+\abs{\nabla\times A}^2+\frac{(1-\abs{u}^2)^2}{2(\e')^2}.
\eeq
\end{theorem}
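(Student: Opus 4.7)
The plan is to work throughout in the blown-up variables \eqref{prelimeq}, where $\ep$ is replaced by $\ep'=\ep\sqrt{\he}$ and the scaled configuration $(u_\ep',A_\ep')$ has energy density $e_\ep$ as in \eqref{ee}. First I would note that the hypothesis $G_\ep(u_\ep,A_\ep)\lesssim (\ep')^{-s}$ with $s<1$ translates into $\int_{\Omega_\ep} e_\ep(u_\ep',A_\ep')\lesssim (\ep')^{-s}$ (both sides scale with the same factor $\he$). To this I would apply the Jerrard--Sandier vortex ball construction to the set $\{|u_\ep'|\le 1/2\}$: growing balls from an initial scale $\sim \ep'$ up to a final radius $r_f$, to be tuned to $(\ep')^{1/2}$, one obtains a finite disjoint family $\{B(a_i,r_i)\}$ covering that set, with $\sum_i r_i\le r_f$, and the sharp lower bound
\[
\int_{B(a_i,r_i)} e_\ep(u_\ep',A_\ep')\ge \pi|d_i|\bigl(\log\tfrac{r_f}{\ep'}-C\bigr),
\]
where $d_i\in\mathbb{Z}$ is the winding number of $u_\ep'/|u_\ep'|$ on $\partial B(a_i,r_i)$. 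With $r_f=(\ep')^{1/2}$ the logarithm is comparable to $\tfrac12|\log\ep'|$.

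Setting $\nu_\ep':=2\pi\sum_i d_i\delta_{a_i}$, the measure bound \eqref{measu} is then immediate: since $r_i\le 1$, for any $a_i\in E$ the ball $B(a_i,r_i)$ is contained in $\hat E$ (cf. \eqref{22bis}), and therefore
\[
|\nu_\ep'|(E)=2\pi\sum_{a_i\in E}|d_i|\le C\sum_{a_i\in E}\frac{\int_{B(a_i,r_i)} e_\ep}{|\log\ep'|}\le C_1\,\frac{\int_{\hat E} e_\ep}{|\log\ep'|},
\]
using disjointness of the $B(a_i,r_i)$.

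The dual-Lipschitz estimate \eqref{Dubliners} is the core of the matter. Using the identity $j(u,A)=(iu,\nabla u)-|u|^2 A$ one has $\mu(u,A)=2\,J(u)+\nabla\times\bigl((1-|u|^2)A\bigr)$, where $J(u):=\tfrac12\nabla\times(iu,\nabla u)$ is the Jacobian. Tested against $\varphi\in C_0^{0,1}(\Omega_\ep)$, the Jacobian of $u_\ep'$ is compared to $\pi\sum_i d_i\delta_{a_i}$ in the classical way: inside each ball $B_i$, where $|u_\ep'|$ may vanish, one integrates by parts and uses that the degree is $d_i$, incurring an error $O\bigl(r_i\|\varphi\|_{\mathrm{Lip}}\int_{B_i} e_\ep\bigr)$; outside the balls $|u_\ep'|$ is bounded below by $1/2$, so a chain-rule plus Cauchy--Schwarz argument produces the contribution $O\bigl(r_f\|\varphi\|_{\mathrm{Lip}}\int e_\ep\bigr)$. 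The magnetic correction $\nabla\times((1-|u_\ep'|^2)A_\ep')$ is dominated by a product estimate: $\|(1-|u_\ep'|^2)A_\ep'\|_{L^1}\lesssim \ep'\int e_\ep$, via Cauchy--Schwarz on the potential term $(1-|u_\ep'|^2)^2/(\ep')^2$ and the $L^2$-control on $\nabla\times A_\ep'$. Summing yields $|\langle \mu'-\nu_\ep',\varphi\rangle|\lesssim r_f\|\varphi\|_{\mathrm{Lip}}\int e_\ep$, which with the choice $r_f=(\ep')^{1/2}$ and the identity $\int e_\ep = 2\,G_\ep(u_\ep,A_\ep)$ is exactly \eqref{Dubliners}.

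The main obstacle is the calibration of the stopping radius $r_f$: it must be small enough to extract the prefactor $(\ep')^{1/2}$ in \eqref{Dubliners} and yet large enough that the ball-by-ball lower bound retains a usable $|\log\ep'|$ factor for \eqref{measu}. The choice $r_f=(\ep')^{1/2}$ is dictated precisely by the assumption $s<1$: the ball-growth can be pushed to this scale because the total energy budget $(\ep')^{-s}$ leaves enough "annular" energy between $\ep'$ and $(\ep')^{1/2}$ to account for the logarithmic cost, while simultaneously the product $r_f\cdot G_\ep=(\ep')^{1/2}G_\ep$ remains tame enough for the resulting bound on $\mu'-\nu_\ep'$ to be meaningful.
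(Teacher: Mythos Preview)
The paper does not give its own proof of this theorem: it is quoted from \cite{Vorlatt}, Theorem~5, and is used here purely as a black box. Your sketch follows the standard Jerrard--Soner route (ball construction at an intermediate scale, then the Jacobian estimate), which is indeed the argument behind the cited result, and the calibration $r_f=(\ep')^{1/2}$ is the right compromise between \eqref{Dubliners} and \eqref{measu}.

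Two points in your write-up need tightening. First, the claimed identity $\int_{\Omega_\ep} e_\ep = 2\,G_\ep(u_\ep,A_\ep)$ is false: after rescaling, $G_\ep$ carries the term $\he\,|\nabla\times A'-1|^2$, not $|\nabla\times A'|^2$, so one only has $\int_{\Omega_\ep} e_\ep \lesssim G_\ep + \he|\Omega|$ (using $|\nabla\times A'|^2\le 2|\nabla\times A'-1|^2+2$ and $\he\gg 1$). This suffices in the regime of the paper since $G_\ep\gtrsim \he$, but you should state the inequality rather than an identity. Second, your bound on the magnetic correction $\nabla\times\bigl((1-|u_\ep'|^2)A_\ep'\bigr)$ via Cauchy--Schwarz requires an $L^2$ bound on $A_\ep'$ itself, not merely on its curl, and the energy controls only the latter. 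You need to insert a Coulomb gauge fixing (legitimate because both $\mu'$ and $\nu_\ep'$ are gauge-invariant, the latter by construction from $u_\ep$ alone) and then use a div--curl estimate; the domain $\Omega_\ep$ blows up, so the Poincar\'e constant must be tracked, which is one of the reasons the full argument in \cite{Vorlatt} or \cite{Book} is lengthier than your sketch suggests.
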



\section{Local minimization class and energy upper bound}

As mentioned, we follow the idea of  \cite{Branches}, and Chapter 11 of \cite{Book}, and seek for local minimizers of $G_\e$ among configurations with ``$N$
vortices'' by minimizing $G_\ep$ over  the class $S_{\e, N}^\beta$, cf. \eqref{defS}, with $\beta$ to be determined later.
From now on we write $({u_\e},{A_\e})$ an element in the Argmin of  $G_\e$ in $S_{\e,N}^{\beta}.$
The fact that such a $({u_\e},{A_\e})$  actually exists follows from classical arguments (a proof can be adapted from  \cite{Branches} or Chapter 11 in \cite{Book}).
The task is then to show a minimizer cannot lie on the boundary.

We first show that the classes $S_{\e,N}^\beta$ are non-empty and provide a priori estimates on the energy of a minimizer in those classes.

\begin{proposition}\label{lemmaparacota}
The classes $S_{\e,N}^\beta$ are non-empty.
Furthermore, if $(u_\e,A_\e)$ is a minimizer of $G_\e$ over $S_{\e,N}^\beta,$ then, as $\e \to 0$, 
\beq\label{Ubound}
G_{\e}({u_\e},{A_\e})\leq G_{\e}^N+\pi N\lepp+NW_{m}-2\pi Nh_{ex}(1-m_{\e,N})+o(N).
\eeq
\end{proposition}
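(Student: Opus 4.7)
The plan is to establish both claims at once by exhibiting a test configuration $(u_*,A_*)$ that belongs to $S_{\e,N}^{\beta}$ for our choice of $\beta$ and whose energy $G_\e(u_*,A_*)$ already matches the right-hand side of \eqref{Ubound} up to an $o(N)$ error. Non-emptiness is then part of the construction, and since any minimizer $(u_\e, A_\e)$ over $S_{\e,N}^{\beta}$ has $G_\e(u_\e, A_\e)\leq G_\e(u_*, A_*)$, the bound \eqref{Ubound} follows.

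The construction is performed in the blown-up coordinates of \eqref{prelimeq}. The key inputs are, from \cite{Vorlatt}, the existence of a current $j_0\in\mathcal{A}_m$ minimizing $W$, together with a ``screening'' procedure that extracts from $j_0$ a finite set of roughly $h_{ex}m_{\e,N}|\omega_{\e,N}|/(2\pi)=N$ well-separated vortex locations covering $(\omega_{\e,N})'$ and a compatible current. After a mild adjustment to make the vortex count exactly $N$ and to push all vortices strictly inside $\omega_{\e,N}$, I would assemble $u_*$ by gluing standard radial degree-one vortex profiles around each vortex (whose self-energy produces the constant $\gamma$ appearing in $W_m$), and choose $A_*$ so that, writing $A_{*,1}:=A_*-\nabla^\perp h_{\e,N}$ in analogy with the notation of the splitting, the current $j(u_*, A_{*,1})$ in the blown-up coordinates reproduces $j_0$ on $(\omega_{\e,N})'$. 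By design, $\mu(u_*, A_{*,1})=2\pi\sum_i\delta_{a_i}$ with all $a_i\in\omega_{\e,N}$.

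The three terms of the splitting \eqref{Decomp.Split} are then estimated one by one. The upper-bound construction of \cite{Vorlatt} delivers
\[
J_{\e,N}(u_*, A_{*,1})\leq \pi N\lepp + NW_m + o(N).
\]
Because the obstacle problem forces $h_{\e,N}\equiv h_{ex}m_{\e,N}$ on the coincidence set $\omega_{\e,N}$, we have $\xi\equiv\xi_{\max}=h_{ex}(1-m_{\e,N})$ there, so the fact that all $N$ vortices lie inside $\omega_{\e,N}$ gives
\[
\int_\Omega \xi\,\mu(u_*, A_{*,1}) = 2\pi N\,\xi_{\max} + o(N) = 2\pi N h_{ex}(1-m_{\e,N}) + o(N).
\]
Finally, the cross term $\tfrac{1}{2}\int(|u_*|^2-1)|\nabla h_{\e,N}|^2$ is controlled by the $L^\infty$ bound $|u_*|\leq 1$, the fact that $|u_*|^2-1$ is essentially supported on $\sim N$ cores of area $O(\e^2)$, and the control $\|\nabla h_{\e,N}\|_{L^\infty}\lesssim h_{ex}\lesssim\e^{-s_0}$ with $s_0<1/7$, giving an $o(1)$ contribution. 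Summing through \eqref{Decomp.Split} yields the desired expansion of $G_\e(u_*, A_*)$; choosing $\beta$ strictly larger than the implicit constants in the $o(N)$ errors then forces $(u_*, A_*)\in S_{\e,N}^{\beta}$.

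The principal difficulty is producing a test configuration whose $J_{\e,N}$ expansion has error genuinely $o(N)$, rather than the coarser $o(N^2)$ tolerated in \cite{Branches, Book}; this precision is what permits reaching $N$ of order $\lep$. The delicate points are fitting a near-minimizer of $W$ into the bounded region $(\omega_{\e,N})'$ without creating boundary-layer loss larger than $o(N)$ near $\partial\omega_{\e,N}$, and ensuring that the reshuffling needed to achieve exactly $N$ vortices does not spoil this expansion. The assumptions \eqref{kinderszenen}--\eqref{Asshexternal} are used throughout to keep all remainders under control.
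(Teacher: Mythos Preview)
Your proposal is correct and follows essentially the same route as the paper: build the test configuration from \cite{Vorlatt}, verify the two defining inequalities of $S_{\e,N}^\beta$, and read off the energy via the splitting \eqref{Decomp.Split}. One sharpening worth noting: in the paper's construction the last two estimates are in fact exact---since $\nabla h_{\e,N}\equiv 0$ on $\omega_{\e,N}$ and $|\widehat{u_\e}|=1$ outside it, the cross term $\tfrac12\int(|\widehat{u_\e}|^2-1)|\nabla h_{\e,N}|^2$ vanishes identically, and likewise $\int_\Omega\xi\,\mu(\widehat{u_\e},\widehat{A_{1,\e}})=2\pi N\xi_{\max}$ with no $o(N)$ remainder---so membership in $S_{\e,N}^\beta$ holds for any fixed $\beta>0$ once $\e$ is small, rather than requiring $\beta$ to be tuned against the errors.
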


\begin{proof}

The proof relies on computing the energy of a carefully built test configuration $(\widehat{u_\e},\widehat{A_\e}).$
The construction is carried out in Theorem 6 in \cite{Vorlatt} where the argument makes use of the fact that in their case $N$ corresponds to the number of vortices of a global minimizer.
We show here that that the construction still works for $N$ satisfying \eqref{kinderszenen} and also make explicit why the test configuration belongs to $S_{\e,N}^\beta$, an issue not present in the global minimization setting of \cite{Vorlatt}.

In Theorem 6 of \cite{Vorlatt}, a test configuration $(\widehat{u_\e},\widehat{A_\e})$ is built in the following way.
\vskip.2in
\underline{Step 1:}
\vskip.1in
A discrete set of points $\mathcal{P}_\e,$ $\mathcal{P}_\e\subseteq\omega_{\e,N}$ and $\abs{\mathcal{P}_\e}=N$ is chosen.
A current $\widehat{\jmath_\e}$ is built that satisfies
\beq\label{ooooo}
\limsup_{\eta\to 0}\frac{1}{m_{\e,N}h_{ex}\abs{\omega_{\e,N}}}\left(\frac{1}{2}\int_{\Omega\setminus \bigcup_{p\in\mathcal{P}_\e} B\left(p,\eta(m_{\e,N}h_{ex})^{-\frac{1}{2}}\right)}\abs{\widehat{\jmath_\e}}^2+\pi N\log\eta\right)
\leq\min_{\mathcal{A}_1}W+\mathcal{O}_\e(1).
\eeq

The current $\widehat{\jmath_\e}$ is constructed under the assumption that  $\Omega$ is convex (Proposition 4.2 and Corollary 7.4 in \cite{Vorlatt}). In addition it is necessary that the coincidence set $\omega_{\e,N}$ be well contained inside $\Omega.$ More precisely one must guarantee
\beq\label{dcstb}
\dist(\omega_{\e,N},\Omega^{c})\gtrsim(m_{\e,N}h_{ex})^{-\frac{1}{2}}
\eeq

In \cite{Vorlatt} an explicit lower bound for $\dist(\omega_{\e,N},\Omega^{c})$ was proven in the case where $N$ is the number of vortices corresponding to global minimizers of $G_\e.$
In our case we     recall that \eqref{kinderszenen} implies $m<1$ and therefore $\sqrt{1-m_{\e,N}}$ is bounded from below by a positive number. From Appendix A in \cite{Vorlatt}, we know that
\beq\label{123}
\|\nabla h_{\e,N}\|_{\infty}\lesssim h_{ex}\sqrt{1-m_{\e,N}}.
\eeq
Then since $h_{\e,N}$ is equal to $m_{\e,N}\,h_{ex}$ on $\omega_{\e,N}$ and to $h_{ex}$ on the boundary, $\xi = h_{ex}-  h_{\e, N}$ and \eqref{ximax},  one has:
\beq
\dist(\omega_{\e,N},\Omega^{c})\gtrsim\frac{\xi_{\max}}{\norm{\nabla h_{\e,N}}_{L^\infty}}\gtrsim\sqrt{1-m_{\e,N}}.\nonumber
\eeq
On the other hand, $m_{\e,N}\geq m_0>0$, $1-m_{\ep, N}$ is bounded below, and $h_{ex} \gg 1$, therefore
\beq
\sqrt{1-m_{\e,N}}\gtrsim\frac{1}{\sqrt{m_{\e,N}\,h_{ex}}},\nonumber
\eeq
which proves \eqref{dcstb} in our case.
\vskip.2in
\underline{Step 2:}
\vskip.1in
The functions $\widehat{u_\e}$ and $\widehat{A_\e}$ are then built satisfying
\beq\label{o1}
\abs{\widehat{u_\e}}=1\mbox{ on }\Omega\setminus\cup_{p\in\mathcal{P}_\e}B(p,M\e)
\mbox{ and }\cup_{p\in\mathcal{P}_\e}B(p,M\e)\subset\omega_{\e,N}.
\eeq

\beq\label{o2}
\widehat{A_\e}:=\widehat{A_{1,\e}}+\nabla^{\perp}h_{\e,N},
\eeq
where $\widehat{A_{1,\e}}$ is chosen to satisfy $\nabla \times  \widehat{A_{1,\e}} = \mu_{\e, N}$  and
\beq\label{o3}
\frac{1}{2}\int_{B(p,M \e)}\abs{\nabla_{\widehat{A_{1,\e}}}\widehat{u_\e}}^2+\frac{(1-\abs{\widehat{u_\e}}^2)^2}{2\e^2}
=\pi\log M+\gamma+o_M(1)+o_\e(1),
\eeq
for all $p\in\mathcal{P}_\e.$

\beq\label{o4}
\frac{1}{2}\int_{B\left(p,\eta(m_{\e,N}\,h_{ex})^{-\frac{1}{2}}\right)\setminus B(p,M \e)}\abs{\nabla_{\widehat{A_{1,\e}}}\widehat{u_\e}}^2+\frac{(1-\abs{\widehat{u_\e}}^2)^2}{2\e^2}
\leq\pi\log\left(\frac{\eta\sqrt{m_{\e,N}\,h_{ex}}}{M\e}\right)+\mathcal{O}(\eta).
\eeq

\beq\label{o5}
\int_\Omega\mu(\widehat{u_\e},\widehat{A_{1,\e}})=\int_\Omega\mu_{\e,N}=2\pi N,\mbox{ and  }\mu(\widehat{u_\e},\widehat{A_{1,\e}})\equiv 0\mbox{ on }(\omega_{\e,N})^c.
\eeq
From this it easily follows that $(\widehat{u_\e},\widehat{A_\e})\in S_{\e,N}^\beta.$
Indeed

\begin{eqnarray}
J_{\e,N}(\widehat{u_\e},\widehat{A_{1,\e}})
&=&
\frac{1}{2}\int_\Omega\abs{\nabla_{\widehat{A_{1,\e}}}\widehat{u_\e}}^2+\abs{\nabla\times\widehat{A_{1,\e}}-\mu_{\e,N}}^2+\frac{(1-\abs{\widehat{u_\e}}^2)^2}{2\e^2}\nonumber\\
&=&\frac{1}{2}\int_\Omega\abs{\nabla_{\widehat{A_{1,\e}}}\widehat{u_\e}}^2+\frac{(1-\abs{\widehat{u_\e}}^2)^2}{2\e^2}.\nonumber
\end{eqnarray}
Above, the second term disappears because of \eqref{o2}.

The identity $\abs{\nabla_A u}^2=\abs{\nabla\abs{u}}^2+\abs{u}^2\abs{j}^2$ implies, together with \eqref{o1} that
\beq
\abs{\nabla_{\widehat{A_{1,\e}}}\widehat{u_\e}}^2=\abs{\widehat{\jmath_\e}}^2\mbox{ on }\Omega\setminus\cup_{p\in\mathcal{P}_\e}B(p,M_\e).\nonumber
\eeq
Therefore, combining \eqref{ooooo}, \eqref{o3} and \eqref{o4}, one gets after taking $M\to\infty,$ next $\eta\to 0,$
\beq\label{belong1}
J_{\e,N}(\widehat{u_\e},\widehat{A_{1,\e}})
\leq
\pi N\abs{\log\e}+2\pi N\min_{\mathcal{A}_1}W+\pi N\log\frac{1}{\sqrt{m_{\e,N}\,h_{ex}}}+N\gamma+o(N).
\eeq
By assumption $m_{\e,N}\to m,$ therefore $N\abs{\log m_{\e,N}}=N\abs{\log m}+o(N).$
According to (1.17) and (1.18) of \cite{Vorlatt}, by scaling, one has
\beq
\min_{\mathcal{A}_1}W=\frac{1}{m}\min_{\mathcal{A}_m}W+\frac{1}{4}\log m,\nonumber
\eeq
Thus, by definition of $W_m$ and $\e',$ \eqref{belong1} is equal to
\beq\label{belong2}
J_{\e,N}(\widehat{u_\e},\widehat{A_{1,\e}})\leq\pi N\abs{\log\e'}+NW_m+o(N).
\eeq
This shows that  $(\widehat{u_\e},\widehat{A_{1,\e}})$ satisfies the upper bound in the definition of $S_{\e,N}^\beta.$ To see it also satisfies the lower bound, we see by \eqref{o5} that

\begin{eqnarray}\label{belong3}
&&\int_\Omega\xi\mu(\widehat{u_\e},\widehat{A_{1,\e}})=\int_{\omega_{\e,N}}\xi\mu(\widehat{u_\e},\widehat{A_{1,\e}})
=\xi_{\max}\int_{\omega_{\e,N}}\mu(\widehat{u_\e},\widehat{A_{1,\e}})\nonumber\\
&&=\xi_{\max}\int_{\Omega}\mu(\widehat{u_\e},\widehat{A_{1,\e}})
=\xi_{\max}\int_{\Omega}\mu_{\e,N}=2\pi Nh_{ex}(1-m_{\e,N}).
\end{eqnarray}
By \eqref{belong2} and \eqref{belong3} we obtain that  $(\widehat{u_\e},\widehat{A_\e})\in S_{\e,N}^\beta.$

To conclude, from the splitting formula \eqref{Decomp.Split}, we see that
\beq
G_{\e}(\widehat{u_{\e}},\widehat{A_{\e}})=G_{\e}^N+J_{\e,N}(\widehat{u_\e},\widehat{A_{1,\e}})-\int_\Omega\xi\mu(\widehat{u_\e},\widehat{A_{1,\e}})+\frac{1}{2}\int_{\Omega}(\abs{\widehat{u_\e}}^2-1)\abs{\nabla h_{\e,N}}^2.\nonumber
\eeq
The last term is identically zero because $\abs{\nabla h_{\e,N}}=0$ on $\omega_{\e,N},$ while $\abs{\widehat{u_\e}}=1$ on $(\omega_{\e,N})^c$ by \eqref{o1}.
Hence, by \eqref{belong2} and \eqref{belong3} again, we see that
\beq
G_{\e}(\hat{u_{\e}},\hat{A_{\e}})\leq G_{\e}^N+\pi N\lepp + N W_m-2\pi Nh_{ex}(1-m_{\e,N})+o(N),
\nonumber
\eeq
and the proposition is proved.
\end{proof}

In the next section, the lower bound component of Theorem \ref{theotheo} requires a very precise approximation of the vorticity measure by $\nu_\e.$ We also need a better control on the rightmost term in the splitting formula \eqref{Decomp.Split}.

\begin{lemma}\label{lemmaB}

Let $(u_\e,A_\e)$ be a minimizing pair on $S_{\e,N}^\beta.$
Assume $N$ and $h_{ex}$ satisfy \eqref{kinderszenen} and \eqref{Asshexternal}. Then there exists an $\alpha>0$ such that
\beq\label{lemmaeq}
\norm{\mu'-\nu_\e'}_{(C_{0}^{0,1}(\Omega_\e))^{*}}=o(\e^\alpha)
\eeq
and
\beq\label{ytambien}
\int_\Omega(1-\abs{u_\e}^2)\abs{\nabla h_{\e,N}}^2=o(1).
\eeq
\end{lemma}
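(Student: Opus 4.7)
My plan is to derive both parts from a single polynomial upper bound on $G_\e(u_\e, A_\e)$, then feed that bound into the Jacobian theorem of Section~2 (for (i)) and into a direct Cauchy--Schwarz argument (for (ii)). The hypothesis $s_0<\frac17$ in \eqref{Asshexternal} is exactly what ensures that every exponent that appears comes out on the right side of zero.

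First I would control $G_\e(u_\e,A_\e)$ using Proposition~\ref{lemmaparacota}. The only term whose size is not obviously controlled by powers of $h_{ex}$ and $|\log\e|$ is $G_\e^N=\|h_{\e,N}-h_{ex}\|_{H^1}^2+2\pi N(m_{\e,N}-1)h_{ex}$; using \eqref{123} and the pointwise bound $|h_{\e,N}-h_{ex}|\le h_{ex}$, one has $G_\e^N\lesssim h_{ex}^2$. The remaining summands in \eqref{Ubound} are $\pi N|\log\e'|+NW_m-2\pi Nh_{ex}(1-m_{\e,N})+o(N)$, each of which is $O(h_{ex}\log\tfrac{1}{\e})$ by \eqref{kinderszenen}, hence absorbed. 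Combined with \eqref{Asshexternal} this yields
\[
G_\e(u_\e,A_\e)\lesssim h_{ex}^2\lesssim \e^{-2s_0}.
\]

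For (i), I would first check that we lie in the regime of the Jacobian theorem: $G_\e\lesssim (\e')^{-s}$ for some $s<1$, which is clear since $(\e')^{-s}=\e^{-s}h_{ex}^{-s/2}$ and $h_{ex}^{2+s/2}\lesssim \e^{-s_0(2+s/2)}$, so any $s\ge 4s_0/(2-s_0)$ works (and $4s_0/(2-s_0)<1$ because $s_0<\tfrac17$). Then \eqref{Dubliners} gives
\[
\|\mu'-\nu_\e'\|_{(C_0^{0,1}(\Omega_\e))^*}\lesssim (\e\sqrt{h_{ex}})^{1/2}G_\e(u_\e,A_\e)\lesssim \e^{1/2}h_{ex}^{9/4}\lesssim \e^{1/2-9s_0/4}.
\]
Since $s_0<\tfrac17$ gives $1/2-9s_0/4>5/28>0$, picking any $\alpha\in(0,\,1/2-9s_0/4)$ yields \eqref{lemmaeq}.

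For (ii), I would pull $\|\nabla h_{\e,N}\|_\infty^2$ out and apply Cauchy--Schwarz:
\[
\int_\Omega (1-|u_\e|^2)|\nabla h_{\e,N}|^2\le \|\nabla h_{\e,N}\|_\infty^2\,|\Omega|^{1/2}\Big(\int_\Omega(1-|u_\e|^2)^2\Big)^{1/2}.
\]
Using \eqref{123} to bound $\|\nabla h_{\e,N}\|_\infty^2\lesssim h_{ex}^2$, and the trivial inequality $\int(1-|u_\e|^2)^2\le 2\e^2 G_\e(u_\e,A_\e)\lesssim \e^2 h_{ex}^2$ from the Ginzburg--Landau energy itself, the whole expression is $\lesssim h_{ex}^2\cdot\e h_{ex}=\e\, h_{ex}^3\lesssim \e^{1-3s_0}=o(1)$, again because $s_0<\tfrac17<\tfrac13$. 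The main obstacle here is purely bookkeeping: calibrating the powers so that the upper bound on $G_\e$, the weight $\sqrt{\e'}$ in \eqref{Dubliners}, and the $\e$ coming from the potential term all balance; the condition $s_0<\tfrac17$ is precisely what makes this possible in a single shot.
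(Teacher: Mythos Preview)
Your proof is correct and follows essentially the same route as the paper: bound $G_\e(u_\e,A_\e)\lesssim h_{ex}^2$ via Proposition~\ref{lemmaparacota}, feed this into the Jacobian estimate \eqref{Dubliners} to get the exponent $\tfrac12-\tfrac{9}{4}s_0>0$ for \eqref{lemmaeq}, and use Cauchy--Schwarz with \eqref{123} to get the $\e^{1-3s_0}$ bound for \eqref{ytambien}. Your explicit verification that the hypothesis $G_\e\lesssim(\e')^{-s}$ with $s<1$ of the Jacobian theorem is met is a welcome addition that the paper leaves implicit.
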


\begin{proof}
It is easy to see that $G_\e^N\lesssim h_{ex}^2,$ while the rest of the positive terms in \eqref{Ubound} are $o(h_{ex}^2).$ Therefore
\beq
G_\e(u_\e,A_\e)\lesssim h_{ex}^2.\nonumber
\eeq
From \eqref{Dubliners} and \eqref{Asshexternal} we can deduce

\beq\label{deduce}
\norm{\mu'-\nu_\e'}_{(C_{0}^{0,1}(\Omega_\e))^{*}}\lesssim\e^{\frac{1}{2}}h_{ex}^{\frac{1}{4}}\cdot h_{ex}^2\lesssim\e^{\frac{1}{2}-\frac{9}{4}s_0},
\eeq
The fact that this is $o(\e^\alpha)$ for some $\alpha>0$ follows from \eqref{Asshexternal}.
Thus, we have proved \eqref{lemmaeq}, and \eqref{JacEst} also follows.
For the second assertion, Cauchy-Schwarz yields
\begin{eqnarray}
\abs{\int_\Omega(\abs{u_\e}^2-1)\abs{\nabla h_{\e,N}}^2}\leq
\left(\int_\Omega(1-\abs{u_\e}^2)^2\right)^{\frac{1}{2}}\|\nabla h_{\e,N}\|^2_{\infty}\abs{\Omega}^{\frac{1}{2}}
\lesssim
\sqrt{\e^2G_\e(u_\e,A_\e)}\|\nabla h_{\e,N}\|^2_{\infty}\nonumber
\end{eqnarray}

Using \eqref{123} and the bound $h_{ex}^2\gtrsim G_\e(u_\e,A_\e),$ the above leads to
\beq
\int(1-\abs{u_\e}^2)\abs{\nabla h_{\e,N}}^2\lesssim \e h_{ex}^3\leq\e^{(1-3s_0)}=o(1),\nonumber
\eeq
by \eqref{Asshexternal}.

\end{proof}


\section{Further Properties of minimizers and lower bound for $G_\e(u_\e,A_\e)$}
 The next proposition, which is the crucial part of the proof,  shows that the elements of $S_{\e,N}^\beta$ have $N$ vortices and also that the upper bound in \eqref{Ubound} is sharp. 
 
 At this point, it is convenient to introduce the notion of ``vorticity adapted'' family of functions. To that end, for a function $\psi_\e,$ let
$\omega^{\psi_\e}:=\left\{x\in\Omega_{\e}\, s.t. \,\psi_\e(x)=\frac{1}{2}\lepp\right\}.$
We will restrict to considering families $\{\psi_\e\}_{\e>0}$ such that
\beq\label{u}
\|\nabla\psi_\e\|_{\infty}=o(\lepp).
\eeq
Now, define the class:
\begin{eqnarray}
\mathcal{VA}&:=&
\bigg\{\{\psi_\ep\}_{\e>0}\mbox{ such that }\psi_\e:\Omega_\e\to\mathbb{R}, \psi_\e\mbox{ satisfies \eqref{u}, }\omega^{\psi_\e}\supset(\omega_{\e,N})';
\nonumber\\
&&0 \le \psi_\e(x)<\frac{1}{2}\lepp  \mbox{ if }x\in\Omega_{\e}\setminus\omega^{\psi_\e};
\psi_\e=0\mbox{ on }\partial\Omega_\e\bigg\}\nonumber
\end{eqnarray}
Abusing notation, we sometimes call $\psi_\e$ a vorticity adapted function when $\{\psi_\e\}_{\e>0}$ is a vorticity adapted family.
This notion will help us in making use of a family of estimates available in \cite{Vorlatt} in a transparent way.
The proof in \cite{Vorlatt} of such estimates, is presented for a particular vorticity adapted function, while here we use the estimates in their full generality. Ultimately, these estimates, together with the quantization of the vorticity measure, will prove that $S_{\e,N}^\beta$ consists only of functions with $N$ essential vortices.

In what follows, a $+$ or $-$ superscript denotes the positive resp. negative part of a function or measure.

\begin{proposition}\label{cuadripropo}
Let $N$ and $h_{ex}$ be sequences satisfying \eqref{kinderszenen} and \eqref{Asshexternal}. Assume $(u^\e,A^\e)\in S_{\e,N}^\beta$. 
Then, for $\e $ small enough, we have:
\begin{enumerate}
\item[(i)] There are no vortices with negative associated degree outside the free boundary, i.e.:
\beq
\nu_\e=\nu_\e^{+}\mbox{ on }(\omega_{\e,N})^c.\nonumber
\eeq

\item[(ii)] The total vorticity is $N,$ in other words:
\beq
\nu_\e(\Omega)=2\pi N.\nonumber
\eeq

\item[(iii)]
\beq
\int_\Omega\xi\mu(u^\e,A_1^\e)\leq 2\pi Nh_{ex}(1-m_{\e,N})+o(1).\nonumber
\eeq

\item[(iv)]
\beq
J_{\e,N}(u^\e,A_1^\e)\geq \pi N\abs{\log\e'}+NW_m+o(N).\nonumber
\eeq
\end{enumerate}
\end{proposition}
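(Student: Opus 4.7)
The plan is a two-sided pincer on the total vortex counts $M_\pm := |\nu_\e^\pm|(\Omega)/(2\pi)\in\mathbb{Z}_{\ge 0}$. The $\int\xi\mu\ge\cdots$ floor in the definition of $S_{\e,N}^\beta$ will force $M_+\ge N$, while the $J_{\e,N}\le\cdots$ ceiling will force $M_++M_-\le N$, each up to a subinteger error that vanishes once $\beta$ is chosen small enough. Integer quantization then pins $M_+=N$ and $M_-=0$, from which all four conclusions follow.

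First I would reduce everything to the atomic measure $\nu_\e=2\pi\sum_i d_i\delta_{a_i}$. By \eqref{lemmaeq} and the polynomial (in $\e^{-1}$) Lipschitz control on $\xi$ coming from \eqref{123}, we may replace $\mu(u^\e,A_1^\e)$ by $\nu_\e$ in the $S_{\e,N}^\beta$ constraint with $o(1)$ error, yielding
\[
\sum_i d_i\,\xi(a_i)\ \ge\ N\,\xi_{\max}-\frac{\beta N}{2\pi}-o(1).
\]
Using $\xi\le\xi_{\max}$ pointwise and dropping the non-positive contributions coming from negative $d_i$ gives $\xi_{\max}M_+\ge N\xi_{\max}-O(\beta N)-o(1)$, that is $M_+\ge N-O(\beta N/\xi_{\max})-o(1)$. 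Since $\xi_{\max}\gg 1$, a small enough $\beta$ pushes the error strictly below $1$, and integrality of $M_+$ forces $M_+\ge N$.

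For the reverse inequality I would manufacture a vorticity adapted function directly from $\xi'$. Set $\psi_\e(x'):=\frac{\lepp}{2\xi_{\max}}\,\xi'(x')$. Then $\psi_\e$ vanishes on $\partial\Omega_\e$, equals $\lepp/2$ on $(\omega_{\e,N})'$, and by \eqref{123} plus the chain rule in blown-up variables,
\[
\|\nabla\psi_\e\|_\infty\ \lesssim\ \frac{\lepp}{\sqrt{\xi_{\max}}}\ =\ o(\lepp),
\]
so $\psi_\e\in\mathcal{VA}$. Feeding this $\psi_\e$ into the refined ball-construction lower bound of \cite{Vorlatt}, pushed through for an arbitrary member of $\mathcal{VA}$ rather than the specific choice used there, should produce a bound of the form
\[
J_{\e,N}(u^\e,A_1^\e)\ \ge\ \pi\lepp\,(M_++M_-)+(M_++M_-)\,W_m\ -\ o(N).
\]
Comparing this with the built-in upper bound $J_{\e,N}\le\pi N\lepp+NW_m+\beta N$, dividing by $\pi\lepp$, and using $N\le K\lepp$, gives $M_++M_-\le N+\beta K/\pi+o(1)$, so choosing $\beta<\pi/K$ and invoking integrality forces $M_++M_-\le N$.

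Combining the two pincer bounds gives $M_+=N$ and $M_-=0$, which is (i) together with (ii). Item (iii) is then immediate from $\sum_i d_i\xi(a_i)\le\xi_{\max}M_+=N\xi_{\max}$ plus the Jacobian replacement, and (iv) is the displayed $J_{\e,N}$ lower bound evaluated at $M_++M_-=N$. The main obstacle is producing the refined lower bound on $J_{\e,N}$ to precision $o(N)$ with the correct $W_m$ coefficient for an arbitrary $\psi_\e\in\mathcal{VA}$: this means transporting the vortex-ball construction and the renormalized-energy analysis of \cite{Vorlatt}, originally stated for a specific vorticity adapted function, to the whole class $\mathcal{VA}$. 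A secondary delicate point is that the two integer errors $O(\beta N/\xi_{\max})$ and $\beta K/\pi$ must simultaneously be less than $1$ for one fixed $\beta$; this is what the hypothesis $\xi_{\max}=h_{ex}(1-m_{\e,N})\gg 1$ together with the range \eqref{kinderszenen}--\eqref{Asshexternal} is used for.
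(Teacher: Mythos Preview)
Your first pincer is sound: the floor $\int\xi\mu \geq 2\pi N\xi_{\max} - \beta N$ together with $0\le\xi \leq \xi_{\max}$ and the Jacobian replacement does force $M_+ \geq N$ once $\beta$ is small, since $N/\xi_{\max}$ is bounded by \eqref{kinderszenen}. The second pincer, however, has a genuine gap. The vorticity-adapted lower bound actually available from \cite{Vorlatt} (recorded in the paper as \eqref{frio}, already for general $\psi_\e\in\mathcal{VA}$) reads
\[
J_{\e,N}' - \int_{\Omega_\e}\psi_\e\,d\nu_\e' \;\geq\; \tfrac{1}{6}\!\int\Bigl(1-\tfrac{2\psi_\e}{\lepp}\Bigr)e_\e + \tfrac{1}{C_1}(g_{\e'})^{+}\bigl((\widetilde{\omega_{\e,N}}')^c\bigr) + g_{\e'}\bigl((\widetilde{\omega_{\e,N}})'\bigr) + o(N),
\]
with $g_{\e'}((\widetilde{\omega_{\e,N}})') \geq NW_m + o(N)$. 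The term $\int\psi_\e\,d\nu_\e'$ pairs $\psi_\e$ with the \emph{signed} measure: moving it across, even when every vortex sits where $\psi_\e = \tfrac{1}{2}\lepp$, recovers only $\pi\lepp(M_+ - M_-)$, not $\pi\lepp(M_+ + M_-)$. Nothing in the machinery upgrades this to the total variation; the $(g_{\e'})^+$ term does penalize negative-degree balls, but only those in $(\widetilde{\omega_{\e,N}}')^c$ and only with a constant of order $\tfrac{1}{C_1}\cdot\tfrac{1}{16}$ rather than $\pi$. Your displayed bound $J_{\e,N} \geq \pi\lepp(M_++M_-) + (M_++M_-)W_m - o(N)$ is therefore not what the tools deliver, and without it the ceiling on $J_{\e,N}$ yields only $M_+ - M_- \leq N + \beta K/\pi + o(1)$, which combined with $M_+ \geq N$ says nothing about $M_-$. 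Note too that item (i) only asserts $\nu_\e^- = 0$ on $(\omega_{\e,N})^c$, not globally; you are attempting to prove strictly more than is claimed, and the machinery does not exclude negative vortices inside $\omega_{\e,N}$.

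The paper instead exploits the three positive terms on the right of the display above in separate passes. Testing with $\psi_\e = \alpha\xi'$ and retaining the $\tfrac{1}{6}\int(1-2\alpha\xi'/\lepp)e_\e$ term, combined with \eqref{measu}, localizes the support of $\nu_\e$ to a neighborhood of $(\omega_{\e,N})'$. Retaining instead the $(g_{\e'})^+$ term and invoking \eqref{lownegdeg}--\eqref{lownegdeg1} gives $\lepp\,|\nu_\e^-((\widetilde{\omega_{\e,N}}')^c)| \lesssim (\beta+o(1))\lepp$, hence (i) by quantization. A \emph{second} $\psi_\e \in \mathcal{VA}$ is then built, equal to $\tfrac{1}{2}\lepp$ on the whole neighborhood now known to contain all vortices, so that $\int\psi_\e\,d\nu_\e' = \tfrac{1}{2}\lepp\,\nu_\e'(\Omega_\e)$ exactly; this gives $\nu_\e'(\Omega_\e) \leq 2\pi N$. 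The matching lower bound uses (i) to justify $\int\xi' d\nu_\e' \leq \xi_{\max}\,\nu_\e'(\Omega_\e)$ (an inequality that fails in the presence of a negative vortex where $\xi' < \xi_{\max}$), closing the pincer on the signed total $\nu_\e'(\Omega_\e)$ rather than on $M_+ + M_-$. Items (iii) and (iv) then follow from these ingredients.
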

\vskip.2in
{\it Proof.}
Recall that $(u^\e,A^\e)\in S_{\e,N}^\beta$ means that
\beq\label{SchW}
J_{\e,N}(u^\e,A_1^\e)\leq\pi N\abs{\log\e'}+NW_m+\beta N,
\eeq
and
\beq\label{SchD}
\int_\Omega\xi\mu(u^\e,A_1^\e)\geq 2\pi Nh_{ex}(1-m_{\e,N})-\beta N.
\eeq

The proof relies on lower bounds on quantities of the form
\beq \label{form}
J_{\e,N}(u^\e,A_1^\e)-\int_\Omega\psi_\e(\sqrt{h_{ex}}\,\cdot)\mu(u^\e,A_1^\e),\nonumber
\eeq
where $\{\psi_\e\}_{\e>0}\in\mathcal{VA}$.

\underline{Step 1:} First we claim we may assume that
 $(u^\e,A^\e)$ satisfies
\beq\label{OCD}
\mbox{div}\,j(u^\e,A^\e)=0.
\eeq
This assumption is justified by the following: we claim there is a $A_0^\e$ such that $\mbox{div}\, j(u^\e,A_0^\e)=0$ and
\begin{eqnarray*}
J_{\e,N}(u^\e,A_1^\e)-\int_{\Omega}\psi_\e(\sqrt{h_{ex}}\,\cdot)\,\mu(u^\e,A_1^\e)&\geq&
J_{\e,N}(u^\e,A_{0,1}^\e)-\int_{\Omega}\psi_\e(\sqrt{h_{ex}}\,\cdot)\,\mu(u^\e,A_{0,1}^\e)+o(1),\nonumber\\
\end{eqnarray*}
holds for any  $\{\psi_\e\}_{\e>0}\in\mathcal{VA}.$
Indeed, letting $A_0^\e$ be such that $G_\e(u^\e,A_0^\e)=\min_{A\in H^1(\Omega, \mathbb{R}^2)}G_\e(u^\e,A)$ (such a minimizer is easily seen to exist), by minimality  $(u^\e,A_0^\e)$ satisfies the second Ginzburg-Landau equations, that is $-\nabla^{\perp}(\nabla\times A_0^\e)=(iu^\e,\nabla_{A_0^\e}u^\e),$ whence $j(u^\e,A_0^\e)$ is divergence free.

By definition of $A_0^\e,$ $G_\e(u^\e,A_0^\e)\leq G_\e(u^\e,A^\e)\lesssim h_{ex}^2.$
As in the proof of  Lemma \ref{lemmaB} this upper bound implies
\beq
\int_\Omega(1- \abs{u^\e}^2)\abs{\nabla h_{\e,N}}^2=o(1).\nonumber
\eeq
We can use the splitting formula \eqref{Decomp.Split} to conclude:
\begin{eqnarray*}
G_\e^N+J_{\e,N}(u^\e,A_1^\e)-\int_\Omega\xi\mu(u^\e,A_1^\e)&\geq&
G_\e^N+J_{\e,N}(u^\e,A_{0,1}^\e)-\int_\Omega\xi\mu(u^\e,A_{0,1}^\e)+o(1).\nonumber\\
\end{eqnarray*}
From this, we can assert that

\begin{multline*}
J_{\e,N}(u^\e,A_1^\e)-\int_\Omega\psi(\sqrt{h_{ex}}\,\cdot)\mu(u^\e,A_1^\e)\geq
J_{\e,N}(u^\e,A_{0,1}^\e)-\int_\Omega\psi_\e(\sqrt{h_{ex}}\,\cdot)
\mu(u^\e,A_{0,1}^\e)\nonumber\\
-\int_\Omega\left(\psi_\e(\sqrt{h_{ex}}\,\cdot)-\xi\right)\left(\mu(u^\e,A_1^\e)
-\mu(u^\e,A_{0,1}^\e)\right)+o(1).
\end{multline*}
If the field $h_{ex}$ is not too large, the Jacobian estimate \eqref{Dubliners} implies that the last term is $o(1).$
To see this, recall that the measure $\nu_\e$ in \eqref{Dubliners} does not depend on $A_\e$ and therefore:
\begin{eqnarray*}
&&\abs{\int_\Omega\left(\psi(\sqrt{h_{ex}}\,\cdot)-\xi\right)\left(\mu(u^\e,A_{0,1}^\e)-\mu(u^\e,A_{1}^\e)\right)}\nonumber\\
&&\quad  \leq
\abs{\int_\Omega\left(\psi(\sqrt{h_{ex}}\,\cdot)-\xi\right)\left(\mu(u^\e,A_{0,1}^\e)-
d\nu_\e\right)}
+\abs{\int_\Omega\left(\psi(\sqrt{h_{ex}}\,\cdot)-\xi\right)\left(d\nu_\e-\mu(u^\e,A_{1}^\e)\right)}\nonumber\\
&&\quad
\lesssim\norm{\psi(\sqrt{h_{ex}}\,\cdot)-\xi}_{\infty}\cdot
\max\bigg\{\norm{\nu_\e-\mu(u^\e,A_{0,1}^\e)}_{\left(C_0^{0,1}(\Omega)\right)^{*}},\norm{\nu_\e-\mu(u^\e,A_{1}^\e)}_{\left(C_0^{0,1}(\Omega)\right)^{*}}\bigg\}\nonumber\\
&& \quad
\lesssim (\norm{\xi}_\infty+\|\nabla\psi\|_\infty) \cdot\left(\sqrt{\e'}G_\e(u^\e,A^\e)\right)
\lesssim (h_{ex}+h_{ex}^{\frac{1}{2}} \lep) \,\e^{\frac{1}{2}}\,h_{ex}^{\frac{1}{4}}\,h_{ex}^2 =o(1),
\end{eqnarray*}
thanks to  \eqref{Asshexternal}.  This proves the claim.
\vskip .3cm

\underline{Step 2:} {\bf  Lower bound with  vorticity adapted functions.}
We switch to blown-up coordinates in order to appeal to the proof of Proposition 6.1 in \cite{Vorlatt}.
Recalling that the Jacobian estimate allows us to approximate $\mu'$ by $\nu_\e',$ we can replace bounding below \eqref{form} by bounding below 
\begin{eqnarray*}J_{\e,N}'((u^{\e})',(A_{1}^{\e})')-\int_{\Omega_\e}\psi_\e\,d\nu_\e'.
\end{eqnarray*}

But it was proved in \cite{Vorlatt} that
there exists a function $g_\e'$ depending only on $((u^{\e})',(A^{\e})')$ such that
for any vorticity adapted function $\psi_\e$, and for some constant $C_1>0$,

\begin{eqnarray}\label{frio}
J_{\e,N}'((u^{\e})',(A_{1}^{\e})')-\int_{\Omega_\e}\psi_\e\,d\nu_\e'
&\geq& \frac{1}{6}\int_{\Omega_\e}\left(1-\frac{2\psi_\e}{\lepp}\right)e_\e+\frac{1}{C_1}(g_{\e'})^{+}\left((\widetilde{\omega_{\e,N}}')^c\right)\nonumber\\
&&+g_{\e'}\left((\widetilde{\omega_{\e,N}})'\right)+o(\abs{(\omega_{\e,N})'}),
\end{eqnarray} where $e_\ep$ is as in \eqref{ee}.

The argument can be found in (6.28)--(6.35) of \cite{Vorlatt} where $\zeta'_\e$ can be replaced by $\psi_\e;$ indeed it can be readily checked that the only fact about $\zeta'_\e$ that was used in the proof was that it was vorticity adapted. Note that here we also use the fact that $\div  \, j(u^\e, A^\e)=0$, obtained in Step 1.

We also record the followings facts from \cite{Vorlatt, improv}
\begin{enumerate}
\item[1.] There exists a family of disjoint balls $\{B_i\}_{i\in I}$ of bounded total radius such that the support of $\nu_\e'$ is contained in $\cup_{i\in I}B_i,$ a universal constant $C_2$ and functions $\{g_\e^{B_i}\}$ such that if $\nu_\e'(B_i)<0$
\beq\label{lownegdeg}
\int_{B_i}g_{\e'}^{B_i}\geq\frac{1}{8}\left(\lepp-C_2\right)\abs{\nu_\e(B_i)}\geq\frac{1}{16}\lepp\abs{\nu_\e(B_i)}
\eeq
(The family $\{B_i\}_{i\in I}$ is built in \cite{improv}, where \eqref{lownegdeg} is proved in Proposition 3.2)
\vskip.2in

\item[2.] There exists a universal constant $C_3$ such that
\beq\label{lownegdeg1}
(g_{\e'})^{+}(x)\geq\frac{3}{4}\left(\sum_{\{i, \, \nu_\e'(B_i)<0\}}\left(g_{\e'}^{B_i}(x)-C_3\right)\mathbf{1}_{B_i}\right)
\eeq
(This is a consequence of the definition of $g_{\e'}$ in \cite{Vorlatt}, combined with (3.22) in \cite{improv} and \eqref{lownegdeg}).
\vskip.3in

\item[3.]   Let $P$ be the probability measure in item $(iv)$ of Theorem \ref{theotheo}.
Because $(u^\e,A^\e)$ satisfies the bounds in the definition of $S_{\e,N}^\beta,$ and since the field $h_{ex}$ is not too large (cf. \eqref{Asshexternal}), the conclusions of Proposition 6.3 in \cite{Vorlatt} hold in our case with the density $m$ instead of $m_\lambda$. 
Assumption \eqref{OCD} then  allows us to assert (recalling the definitions \eqref{WW} and \eqref{tildeU})
\beq\label{calor}
g_\e'(\widetilde{\omega_{\e,N}}')
\geq N\left(\frac{2\pi}{m}\int W(j)\,dP(j)+\gamma+o(1)\right)+o(\abs{(\omega_{\e,N})'})
\geq NW_m+o(N),
\eeq since, in view of \eqref{relaciones}
\begin{eqnarray}\label{sizeo}
\abs{(\omega_{\e,N})'}=\abs{\omega_{\e,N}}h_{ex}=\frac{2\pi N}{m_{\e,N}}
\leq\frac{2\pi N}{m_0}=\mathcal{O}(N).
\end{eqnarray}
\end{enumerate}
Using 
\eqref{frio} together with \eqref{calor}--\eqref{sizeo}, we have thus found that for any $\psi_\e \in \mathcal{VA}$,
\beq\label{eq.Ren.energy}
J_{\e,N}'((u^\e)',(A_{1}^{\e})')-\int_{\Omega_\e}\psi_\e\,d\nu_\e'
\geq \frac{1}{6}\int_{\Omega_\e}\left(1-\frac{2\psi_\e}{\lepp}\right)e_\e+NW_m+o(N).
\eeq

\vskip.3in
\underline{Step 3:} {\bf All vortices of $u^\e$ are contained in a neighborhood of $(\omega_{\e,N})'.$}
\vskip.2in
We will quite often need to use the fact that for any measurable set $A,$
\beq\label{quantnu}
\nu_\e'(A)\in 2\pi\mathbb{Z}, \mbox{ while }\abs{(\nu_\e')^{+}(A)},\,\abs{(\nu_\e')^{-}(A)},\,\abs{\nu_\e'}(A)\in 2\pi\mathbb{N}.
\eeq

  In what remains, $\alpha$ is set to be the constant
\beq 
\alpha:=\frac{\abs{\log\e'}}{2h_{ex}(1-m_{\e,N})}.\nonumber
\eeq Note that $\alpha$ depends  on $\ep $ and $N$ but we omit to indicate this dependence.
Also, from here on, all the primed quantities denote the blown up versions of the unprimed ones, according to the transformations \eqref{prelimeq} -- \eqref{foranymag}.
We begin by noticing that  $\alpha\xi'\in\mathcal{VA}.$ Indeed, since $h_{ex}\gg 1,$ and $m=\lim_{\e\to 0}m_{\e,N}<1$ one can readily see, using \eqref{123} and the fact that $\he (1-m_{\e, N}) \gg 1$, that
\begin{multline}\label{Arpe}
\norm{\nabla(\alpha\xi')}_{\infty}
=\mathcal{O}\left(\frac{\lepp}{h_{ex}(1-m_{\e,N})}\right)\cdot
\norm{\nabla\xi}_{\infty}\cdot\frac{1}{\sqrt{h_{ex}}}
=\mathcal{O}\left(\frac{\lepp}{(h_{ex})^{\frac{3}{2}}(1-m_{\e,N})}\right)
\cdot\norm{\nabla h_{\e,N}}_{\infty}
\\
\lesssim\mathcal{O}\left(\frac{\lepp}{(h_{ex})^{\frac{3}{2}}(1-m_{\e,N})}
\right)h_{ex}\sqrt{1-m_{\e,N}}
\lesssim\mathcal{O}\left(\frac{\lepp}{\sqrt{h_{ex}(1-m_{\e,N})}}\right)=
o(\abs{\log\e'}).
\end{multline}

By \eqref{SchW}, \eqref{SchD} and definition of $\alpha,$ we find that
\begin{multline}\label{Hades}
J_{\e,N}'((u^{\e})',(A_{1}^{\e})')-\alpha\int_{\Omega_\e}
\xi'\mu'((u^{\e})',(A_{1}^{\e})')\\
\leq
\pi N\lepp+NW_m+\beta N-\alpha( 2\pi N h_{ex}(1-m_{\e,N})-\beta N)\\
=(1+\alpha)\beta N+NW_m.
\end{multline}

 Conversely, the bound  \eqref{eq.Ren.energy} applied to $\psi_\e=\alpha \xi'$ 
together with \eqref{Hades}  will allow us to rule out the presence of vortices with negative degrees where $\xi'$ is small. In turn, we shall see that this implies an upper bound on the total vorticity $\nu_\e(\Omega_\e)$.

 As announced, we  apply \eqref{eq.Ren.energy} to $\psi_\e=\alpha \xi'$  which is indeed vorticity adapted, combining with the Jacobian estimate \eqref{Dubliners}, \eqref{Hades} and \eqref{eq.Ren.energy} we obtain that
 \begin{eqnarray}
\nonumber (1+\alpha)\beta N+NW_m
& \geq&
J_{\e,N}'((u^\e)',(A^\e_1)')-\alpha\int_{\Omega_\e}\xi'\mu'((u^\e)',(A_1^\e)')\\ \nonumber
&\geq &
J_{\e,N}'((u^\e)',(A^\e_1)')-\alpha\int_{\Omega_\e}\xi'\,d\nu_\e' +\mathcal{O}\left(\|\nabla(\alpha\xi')\|_{\infty}
\norm{\mu'-\nu_\e'}_{\left(C_0^{0,1}(\Omega_\e)\right)^{*}}\right)\\
\label{ultratumba}
&\geq &  \frac{1}{6}\int_{\Omega_\e}\left(1-\frac{2\alpha\xi'}{\lepp}\right)e_\e+NW_m+o(N)
+\mathcal{O}\left(\abs{\log\e'}\e^{\frac{1}{2}}\,h_{ex}^{\frac{1}{4}}\,G_\e(u^\e,A^\e)\right).
\end{eqnarray} 
The last term is $o(1)$ because $G_\e(u^\e,A^\e)\lesssim h_{ex}^2$ and $h_{ex}$ satisfies \eqref{Asshexternal}. Thus, we obtain
\beq\label{from}
\frac{1}{6}\int_{\Omega_{\e}}\left(1-\frac{2\alpha\xi'}{\lepp}\right)e_\e\leq\beta(1+\alpha)N+o(N).
\eeq
For any $Z>0$ small, we introduce the following neighborhood of $\bo$:
\beq
U^Z:=\left\{x'/\xi'(x')\leq\left(1-\frac{6C_1\left( K+\frac{c\abs{\Omega}(1-m_{\e,N})}{2\pi}\right)Z}{\pi}\right)h_{ex}(1-m_{\e,N})\right\},\nonumber
\eeq
where $C_1$ and $K$ are the constants in \eqref{measu} and \eqref{kinderszenen} respectively.
From the definition of $\alpha,$ it follows that
\beq
1-\frac{2\alpha\xi'}{\abs{\log\e'}}
\geq\frac{6C_1\left( K+\frac{c\abs{\Omega}(1-m_{\e,N})}{2\pi}\right)\beta}{\pi}\;
\mathbf{1}_{U^\beta}.\nonumber
\eeq
Using this together with \eqref{measu}, we obtain (cf. \eqref{22ter}) 
\begin{eqnarray}\nonumber
\int_{\Omega_\e}\left(1-\frac{2\alpha\xi'}{\abs{\log\e'}}\right)e_\e
&\geq&\frac{6C_1\left( K+\frac{c\abs{\Omega}(1-m_{\e,N})}{2\pi}\right)\beta}{\pi}\;
\int_{U^\beta}e_\e\\\label{0.18response}
&\geq&\frac{6\left( K+\frac{c\abs{\Omega}(1-m_{\e,N})}{2\pi}\right)\beta}{\pi}\;
\abs{\nu_\e'}(\check{U^\beta})\abs{\log\e'}.
\end{eqnarray}

On the other hand, by \eqref{kinderszenen}, $N\leq K\abs{\log\e'}$ and $\frac{\alpha N}{\lepp}  \leq\frac{c\abs{\Omega}}{2\pi(1-m_{\e, N})},$ so combining \eqref{from} and \eqref{0.18response}, we get $\frac{\abs{\nu_\e'}(\check{U^\beta})}{\pi}\leq 1+o\left( \frac{N}{\lepp}\right) <2$ (for $\e $ small enough),
which implies
\beq\label{vortnearfb}
\abs{\nu_\e'}(\check{U^\beta})=0,
\eeq
in light of \eqref{quantnu}.

\vskip.3in
\underline{Step 3:} {\bf There are no vortices with negative  degree outside $(\omega_{\e,N})'$}
\vskip.2in
We turn to the proof of item (i).
We invoke \eqref{frio} applied to $\alpha \xi'$ and \eqref{calor}--\eqref{sizeo} again to get
\begin{equation*}
J_{\e,N}'((u^\e)',(A_{1}^{\e})')-\int_{\Omega_\e}\alpha \xi' \,d\nu_\e'
\geq \frac{1}{C_1}(g_{\e'})^{+}((\widetilde{\omega_{\e,N}}')^c)+NW_m+o(N),
\end{equation*}
which, arguing as in   \eqref{ultratumba} yields
\beq\label{chiarina}
(1+\alpha)\beta N+o(N)\geq \frac{1}{C_1}(g_{\e'})^{+}((\widetilde{\omega_{\e,N}}')^c).
\eeq
Recall that from \eqref{kinderszenen} we have $\alpha\beta N\lesssim\beta\abs{\log\e'}\left(\frac{N}{h_{ex}}\right)\lesssim\beta\abs{\log\e'}$ hence $(1+\alpha) \beta N +o(N)  \lesssim \beta \lepp +o(\lepp).$

On the other hand, 
from \eqref{lownegdeg} and \eqref{lownegdeg1} one can bound $(g_{\e'})^{+}((\widetilde{\omega_{\e,N}}')^c)$ from below by (letting $C$  denote a generic positive constant)
\begin{equation}
(g_{\e'})^{+}((\widetilde{\omega_{\e,N}}')^c)
\ge
C\lepp\abs{\nu_\e^{-}((\widetilde{\omega_{\e,N}}')^c)}
-C_3\frac{\abs{\nu_\e^{-}((\widetilde{\omega_{\e,N}}')^c)}}{2\pi}\sum_{i\in I}\abs{B_i}
\gtrsim
\lepp\abs{\nu_\e^{-}((\widetilde{\omega_{\e,N}}')^c)}.\nonumber
\end{equation}

Going back to \eqref{chiarina}, this implies $\beta+o(1) \geq C\abs{\nu_\e^{-}\left((\widetilde{\omega_{\e,N}}')^c\right)},$ where $C$ is a positive constant independent of $\beta$ and $\e.$ Since $\abs{\nu_\e^{-}}\in 2\pi\mathbb{N},$  we may choose $\beta$ small so that for any $\ep $ small enough, it imposes \beq\label{nonegoutfb}
\abs{(\nu_\e')^{-}\left((\widetilde{\omega_{\e,N}}')^c\right)}=0.
\eeq
Since $ (\widetilde{\omega_{\e,N}}')^c$ contains $ (\omega_{\e,N}')^c$, we have thus  proved item (i).

\vskip.3in
\underline{Step 5:} {\bf The total vorticity of $u^\e$ is $N$}
\vskip.2in
We proceed, once more, applying \eqref{eq.Ren.energy} only this time to a vorticity adapted function $\psi_\e$ such that $\omega^{\psi_\e}\supseteq\Omega_\e\setminus U^\beta$. Such a function exists.
Indeed, by \eqref{Arpe}, for $x \in \partial (\Omega_\e \backslash U^\beta)$ we have $|\alpha \xi'(x)|\le o(\lepp) d(x, \partial \Omega_\e)$ while $\alpha \xi'(x)=\hal (1-\mathcal{O}(1)\beta) \lepp$. It follows that $d(\partial (\Omega_\e \backslash U^\beta), \partial \Omega_\e) \gg 1$ as $\e \to 0$ (if $\beta $ is a small enough constant), and thus $d(\partial (\widehat{\Omega_\e\backslash U^\beta}), \partial \Omega_\e) \gg 1$. One can therefore build a family of functions $\psi_\e$ which are equal to $\hal \lepp$ in $\widehat{\Omega_\e\backslash U^\beta}=\Omega_\e \backslash \check{U^\beta}$, which vanish on $\partial \Omega_\e$ and satisfy $\|\nabla \psi_\e\|_\infty =o(\lepp)$, i.e. belongs to $\mathcal{VA}$.

Applying \eqref{eq.Ren.energy} to that family yields
\beq
J_{\e,N}'((u^\e)',(A_{1}^{\e})')-\int_{\Omega_\e}\psi_\e\,d\nu_\e'\geq NW_m+o(N).\nonumber
\eeq
We can then appeal to \eqref{lemmaeq} again to obtain
\begin{multline*}
J_{\e,N}'((u^\e)',(A_{1}^{\e})')-\int_{\Omega_\e}\psi_\e\mu'((u^\e)',(A_{1}^{\e})')
\\ \geq
J_{\e,N}'((u^\e)',(A_{1}^{\e})')-\int_{\Omega_\e}\psi_\e^\beta\,d\nu_\e'+\mathcal{O}
(\norm{\nabla\psi_\e}_{\infty}\,\e^\alpha)\geq NW_m+o(N).
\end{multline*}
Since $(u^\e,A^\e)\in S_{\e,N}^\beta$, we must have
\begin{eqnarray}\label{FUN}
\pi N\abs{\log\e'}+\beta N\geq
J_{\e,N}'((u^\e)',(A_{1}^{\e})')-NW_m\geq\int_{\Omega_\e}\psi_\e\,d\nu_\e'+o(N).
\end{eqnarray}
Now, by \eqref{vortnearfb} and definition of $\psi_\e$
\beq\label{superduper}
\int_{\Omega_\e}\psi_\e\,d\nu_\e'
=\int_{\Omega_\e\setminus \check{U^\beta}}\psi_\e\,d\nu_\e'
=\int_{\widehat{\Omega_\e\setminus U^\beta}}\frac{1}{2}\abs{\log\e'}\,d\nu_\e'
=\frac{1}{2}\abs{\log\e'}\,\nu_\e'(\Omega_\e\setminus \check{ U^\beta}).
\eeq
Plugging this into \eqref{FUN} leads us to
\beq
N+\frac{\beta}{\pi}\frac{N}{\lepp}\geq \frac{1}{2\pi}\nu_\e'(\Omega_\e\setminus \check{ U^\beta})
+o\left(\frac{N}{\lepp}\right).\nonumber
\eeq
This implies that for $\e$ small enough,
\beq\label{Thisyields}
N\geq
\frac{\nu_\e'(\Omega_\e\setminus \check{U^\beta})}{2\pi}=\frac{\nu_\e'(\Omega_\e)}{2\pi},
\eeq
because of \eqref{quantnu} and \eqref{vortnearfb}.
We now show  that there is actually equality in \eqref{Thisyields}.
To that end we first prove
\beq
\int_{\Omega_\e}\xi'\mu((u^\e)',(A_1^\e)')\leq\nu_\e(\Omega_\e)
\xi'_{\max}+o(1).\nonumber
\eeq
This is not immediately clear since $\xi$ is not constant and $\nu_\e$ is not necessarily equal to $\nu_\e^{+}.$
However, from \eqref{Asshexternal} and \eqref{JacEst}, we have
\begin{equation*}
\int\xi'\mu((u^\e)',(A_1^\e)')=\int_{\Omega_\e}\xi'\,d\nu_\e'+\mathcal{O}(\e^{\frac{1}{2}}\,h_{ex}^{\frac{1}{4}+2}\abs{\nabla\xi'}_{\infty})
= \int_{\widetilde{\omega_{\e,N}}'}\xi'\,d\nu_\e'
+ \int_{(\widetilde{\omega_{\e,N}}')^c}\xi'\,d\nu_\e'
+o(\e^{\alpha}).
\end{equation*}
On the other hand, 
from \eqref{nonegoutfb},

\begin{equation*}
 \int_{(\widetilde{\omega_{\e,N}}')^c}\xi'\,d\nu_\e'
= \int_{(\widetilde{\omega_{\e,N}}')^c}\xi'\,d(\nu_\e')^+  
\leq \xi_{\max}(\nu_\e')^{+}((\widetilde{\omega_{\e,N}}')^c).
\end{equation*}

Also,
$$
 \int_{(\widetilde{\omega_{\e,N}}')}\xi'\,d\nu_\e'
= \xi_{\max}\nu_\e'(\widetilde{\omega_{\e,N}}')
$$
because $\xi'= \xi_{\max}\mbox{ on }(\widetilde{\omega_{\e,N}}').$
Collecting these estimates we are led to 
\beq\label{song}
\int_{\Omega_\e}\xi'\mu'((u^\e)',(A_1^\e)')
\leq\xi_{\max}\left(\nu_\e'(\widetilde{\omega_{\e,N}}')+(\nu_\e')^{+}((\widetilde{\omega_{\e,N}}')^c)\right)+o(1)
=\xi_{\max}\nu_\e'(\Omega_\e)+o(1)
\eeq
where the last equality follows from \eqref{nonegoutfb}.
Consequently $\nu_\e'(\Omega_\e)>2\pi(N-1),$ for otherwise we would have (cf. \eqref{ximax})
\beq
\int_{\Omega_\e}\xi'\mu'((u^\e)',(A_1^\e)')\leq   2\pi (N-1) \xi_{\max}+o(1) = 2\pi(N-1)h_{ex}(1-m_{\e,N})+o(1)\nonumber
\eeq
 and the right-hand side is strictly smaller than $2\pi Nh_{ex}(1-m_{\e,N})-\beta N,$ if $\beta$ is small enough, by virtue of \eqref{kinderszenen}. This contradicts $(u^\e,A_1^\e)\in S_{\e,N}^\beta.$ Therefore, $\nu_\e'(\Omega_\e)\geq 2\pi N$ (recall $\nu_\e'(\Omega_\e)\in 2\pi\mathbb{Z}.$) Thus,  with \eqref{Thisyields}, we have proved item (ii).
\vskip.3in
\underline{Step 6:} 
{\bf Proof of (iii) and (iv)}
\vskip.2in

Item (iii) follows readily from \eqref{song} since we  already know that $\nu_\e'(\Omega_\e)=2\pi N.$
For item (iv), note that from \eqref{vortnearfb}, \eqref{FUN} and \eqref{superduper} we deduce
\begin{eqnarray}
&&J_{\e,N}'((u^\e)',(A_1^\e)')
\geq
\int_{\Omega_\e}\psi_\e\,d\nu_\e'+NW_m+o(N)\nonumber\\
&&=\frac{1}{2}\abs{\log\e'}\nu_\e'(\Omega_\e\setminus \check{U^\beta})+NW_m+o(N)
= \pi N\abs{\log\e'}+NW_m+o(N).\nonumber
\end{eqnarray}
The proposition is proved.
\qed

\section{Conclusion}

In this section we finish the proof of our main result.
\vskip.2in

\underline{Proof of Theorem \ref{theotheo}}
\vskip.1in

As mentioned earlier, a minimizer in $S_{\e,N}^\beta$ exists, so to show it actually corresponds to a local minimizer, it only remains to prove $(u_\e,A_\e)\notin\partial S_{\e,N}^\beta.$

We decompose $\partial S_{\e,N}^\beta=(\partial S_{\e,N}^\beta)^{+}\bigcup(\partial S_{\e,N}^\beta)^{-}$  where
\beq
(\partial S_{\e,N}^\beta)^{+}:=\big\{(u,A)\in H^1\times H^1
\mbox{ such that: } J_{\e,N}(u,A_1)=\pi N\abs{\log\e'}+NW_m+\beta N \big \}\nonumber
\eeq
and
\beq
(\partial S_{\e,N}^\beta)^{-}:=\big\{(u,A)\in H^1\times H^1
\mbox{ such that: } \int_{\Omega}\xi\mu(u,A_1)= 2\pi Nh_{ex}(1-m_{\e,N})-\beta N \big \}.\nonumber
\eeq
Since $(u_\e,A_\e)$ is a minimizer in $S_{\e,N}^\beta,$ we may apply Lemma \ref{lemmaB} and  Proposition \ref{cuadripropo}.
By \eqref{Decomp.Split} and \eqref{ytambien}, we have
\beq\label{ForAsympExp}
G_\e(u_\e,A_\e)=G_\e^N+J_{\e,N}(u_\e,A_{1,\e})-\int_\Omega\xi\mu(u_\e,A_{1,\e})+o(1).
\eeq
If $(u_\e,A_\e)\in(\partial S_{\e,N}^\beta)^{+}$ then by (iii) in Proposition \ref{cuadripropo}, we have
\beq
G_\e(u_\e,A_\e)\geq G_\e^N+\pi N\abs{\log\e'}+NW_m+\beta N-2\pi Nh_{ex}(1-m_{\e,N})+o(N).\nonumber
\eeq
If instead,  $(u_\e,A_\e)\in(\partial S_{\e,N}^\beta)^{-},$ invoking (iv) in Proposition \ref{cuadripropo} yields
\beq
G_\e(u_\e,A_\e)\geq G_\e^N+\pi N\abs{\log\e'}+NW_m-\left(2\pi Nh_{ex}(1-m_{\e,N})-\beta N\right)+o(N).\nonumber
\eeq

In either case there is a contradiction with the upper bound \eqref{Ubound}, so necessarily $(u_\e,A_\e)\notin\partial S_{\e,N}^\beta,$ whence $(u_{\e}, A_{\e})$ is a local minimizer.

From \eqref{ForAsympExp}, (iii) and (iv) in Proposition \ref{cuadripropo}, we must have
\beq\label{LowforAsymp}
G_\e(u_\e,A_\e)\geq G_\e^N+\pi N\abs{\log\e'}+NW_m-2\pi Nh_{ex}(1-m_{\e,N})+o(N).
\eeq
Thus, the asymptotic expansion \eqref{Maya} follows from \eqref{LowforAsymp} and \eqref{Ubound}.
\eqref{JacEst}, \eqref{Azteca} were proved in Lemma \ref{lemmaB} and Proposition \ref{cuadripropo} respectively.

We prove item (iii). It essentially follows the proof of (1.34) in \cite{Vorlatt}. First, arguing as in item 2 of Proposition 6.1 in \cite{Vorlatt}, replacing $F_\e'$ with $J_{\e,N}'-\alpha\int_{\Omega_\e}\xi'\mu',$  we can sandwich the term  $\int_\Omega |\nabla \times A_{1, \e} -\mu
_{\e, N}|^2$ in $J_\e(u_\e, A_{1,\e})$  between the upper and lower bound of \eqref{ultratumba}  and hence control it
by $\mathcal{O}(N)$, i.e.
\beq \label{nmu}
\norm{\nabla\times A_{1,\e}-\mu_{\e,N}}_{L^2}
\lesssim\sqrt{N}
\eeq
Using the same item of Proposition 6.1 in \cite{Vorlatt} also allows to control, for $1<p<2$,
\beq 
\int_{\Omega_\e} |j(u_\e', A_{1,\e}')|^p \le C_p  \left(J_{\e,N}'( u_\e', A_{1,\e}' ) -\alpha\int_{\Omega_\e}\xi'\mu'+|\omega_{\e, N}'|\right)\le C (N+ |\omega_{\e, N}'|).\nonumber
\eeq
Rescaling we find that
for any $1<p<2$, 
\beq\label{contj2}
\|j(u_\e, A_{1,\e})\|_{L^p(\Omega)} \lesssim \he^{\hal-\frac{1}{p} } \left(N^{1/p}+ |\omega_{\e,N}'|^{1/p}\right) \lesssim  \he^{\hal-\frac{1}{p} } N^{1/p}
\lesssim N^{\frac{1}{2}}
\eeq
thanks to  \eqref{sizeo}, then \eqref{kinderszenen} giving $N\lesssim \he$.
On the other hand, by direct calculation we have 
$$\mu(u_\e,A_\e)-\mu(u_\e, A_{1,\e})= \nabla \times ((1-|u_\e|^2)\nabla^{\perp}
 h_{\ep, N}) $$
from \eqref{Asshexternal}and \eqref{123}
\beq
\norm{(1-\abs{u_\e}^2)\abs{\nabla h_{\e,N}}}_{L^2}
\lesssim\sqrt{\e^2\,G_\e}\cdot\abs{\nabla h_{\e,N}}_{\infty}
\lesssim\e\, h_{ex}^2=o(1),\nonumber
\eeq
from which one obtains
\beq\label{hastaaca}
\norm{\mu(u_\e,A_{1,\e})-\mu(u_\e,A_\e)}_{W^{-1,p}}=o(1).\eeq We may now write, using the triangle inequality
\begin{multline*}
\norm{\mu(u_\e,A_\e)-\mu_{\e,N}}_{W^{-1,p}}
\leq
\norm{\mu(u,A_\e)-\mu(u_\e,A_{1,\e})}_{W^{-1,p}}
+\norm{\mu_{\e,N}-\nabla\times A_{1,\e} }_{W^{-1,p}}\\
+ \norm{\mu(u_\e, A_{1,\e}) - \nabla \times A_{1, \e} }_{W^{-1,p}}
\lesssim N
\end{multline*}
where we have used  \eqref{nmu} (together with an embedding inequality), \eqref{hastaaca} and the
fact that $\mu(u_\e, A_{1,\e}) - \nabla \times A_{1,\e}=\nabla \times
 j(u_\e, A_{1,\e})$ combined with \eqref{contj2}.
Consequently, item (iii) is proved.

We turn to the proof of (iv).
Since $\alpha\xi'\in\mathcal{VA},$ \eqref{frio}, \eqref{calor} and \eqref{Maya} allow us to assert
\beq
NW_m+o(N)\geq
 N\left(\frac{2\pi}{m}\int W(j)\,dP(j)+\gamma+o(1)\right)\nonumber
\eeq
which in turns translates into
\beq
W_m\geq
\frac{2\pi}{m}\int W(j)\,dP(j)+\gamma.\nonumber
\eeq

To conclude, since $P$-a.e. $j\in\mathcal{A}_{m}$ (as given by \cite{Vorlatt}) this implies $P$-a.e. $j$ minimizes $W$ over $\mathcal{A}_{m}.$ The result follows.

\qed

\vskip 1cm

{\sc Andres Contreras}\\
UPMC  Univ  Paris 06, UMR 7598 Laboratoire Jacques-Louis Lions,\\
 Paris, F-75005 France ;\\
 CNRS, UMR 7598 LJLL, Paris, F-75005 France \\
 {\tt contreras@ann.jussieu.fr}
 \\ \\
 
{\sc Sylvia Serfaty}\\
UPMC Univ  Paris 06, UMR 7598 Laboratoire Jacques-Louis Lions,\\
 Paris, F-75005 France ;\\
 CNRS, UMR 7598 LJLL, Paris, F-75005 France \\
 \&  Courant Institute, New York University\\
251 Mercer st, NY NY 10012, USA\\
{\tt serfaty@ann.jussieu.fr}

\end{document}